\newcommand{\R}{\mathbb R}
\newcommand{\E}{\mathbb E}
\newcommand{\N}{\mathbb N}
\newtheorem{theorem}{Theorem}
\newtheorem{lemma}[theorem]{Lemma}
\theoremstyle{remark}
\newtheorem{example}{Example}
\newtheorem{remark}{Remark}
\begin{document}

\author[Zs. Páles]{Zsolt P\'ales}
\address{Institute of Mathematics, University of Debrecen, 
Egyetem tér 1, 4032 Debrecen, Hungary}
\email{pales@science.unideb.hu}
\author[T. Szostok]{Tomasz Szostok}
\address{Institute of Mathematics, University of Silesia in Katowice, Bankowa 14, 40-007 Katowice, Poland}
\email{tomasz.szostok@us.edu.pl}
\title[Ordering measures by higher-order monotone functions]{Orderings on measures induced by higher-order monotone functions}
\keywords{Monotone functions of higher order; Linear functional inequalities; Stochastic ordering}
\subjclass[2020]{60E15, 26A51, 26D10, 39B62}
\thanks{{\it Corresponding author}: Tomasz Szostok}
\thanks{The research of the first author was supported by the K-134191 NKFIH Grant.}
\begin{abstract}
The main aim of this paper is to study the functional inequality
\begin{equation*}
\int_{[0,1]}f\bigl((1-t)x+ty\bigr)d\mu(t)\geq 0, \qquad x,y\in I \mbox{ with } x<y,
\end{equation*}
for a continuous unknown function $f:I\to\R$, where $I$ is a nonempty open real interval and $\mu$ is a signed and bounded Borel measure on $[0,1]$. We derive necessary as well as sufficient conditions for its validity in terms of higher-order monotonicity properties of $f$.

Using the results so obtained we can derive sufficient conditions under which the inequality
$$\E f(X)\leq \E f(Y)$$
is satisfied by all functions which are simultaneously: $k_1$-increasing (or decreasing), $k_2$-increasing (or decreasing), \dots , $k_l$-increasing (or decreasing) for given nonnegative integers $k_1,\dots,k_l.$ This extends several well-known results on stochastic ordering.

A necessary condition for the $(n,n+1,\dots,m)$-increasing ordering is also presented.
\end{abstract}

\maketitle
\section{Introduction}
In recent years, stochastic ordering tools were successfully applied to obtain new results in the theory of functional inequalities; see, for example, \cite{Rajba},  \cite{MRW} for the application of the Ohlin lemma and  \cite{Ohlin}, \cite{OSz} for the application of Levin-Stechkin theorem. In addition, the orderings involving higher-order convex functions (see \cite{DLS}) were used among others in \cite{Szres} and \cite{Sznum}. 
See also \cite{CLLM} for recent applications of higher-order convex orderings in economics and \cite{CW} for the application in queueing systems. 

In this paper, we will use the notion of $n$-increasing ($n$-decreasing) function instead of $n$-convex ($n$-concave) since we intend to cover the cases of $f$ being positive (negative) or increasing (decreasing) as well.
To this end, we need to recall the notion of higher-order divided differences. Let $I\subset\R$ be a nonempty open interval throughout this paper. Then, for a function $f\colon I\to\R$, for $n\in\N\cup\{0\}$ and for all pairwise distinct elements $x_0,\dots,x_n\in I$, we define  
$$
f[x_0,\dots,x_n]
:=\sum_{j=0}^n\frac{f(x_j)}{\prod_{k=0,k\neq j}^n(x_j-x_k)},
$$
which is called the \emph{$n$th-order divided difference} for $f$ at $x_0,\dots,x_n$. It is easy to see that the $n$th-order divided difference is a symmetric function of its arguments.
We say that $f$ is \emph{$n$-increasing} if, for all pairwise distinct elements $x_0,\dots,x_n\in I$, we have that $f[x_0,\dots,x_n]\geq 0$. Observe that $0$-increasingness means nonnegativity, $1$-increasingness is equivalent to increasingness, and $2$-increasingness coincides with convexity. Analogously, we say
that $f$ is \emph{$n$-decreasing} if $(-f)$ is $n$-increasing, i.e., for all pairwise distinct elements $x_0,\dots,x_n\in I$, we have that $f[x_0,\dots,x_n]\leq 0$. In this case, $0$-decreasingness means nonpositivity, $1$-decreasingness is equivalent to decreasingness, and $2$-decreasingness coincides with concavity.

In the following theorems, we present the characterizations of $n$-in\-cre\-asing\-ness due to Popoviciu \cite{P}, which may also be found in Kuczma's book \cite{Kuczma} stated as Theorems 15.8.4, 15.8.5, and 15.8.6, respectively.

\begin{theorem}\label{thm:Popoviciu1}
Let $n\in\N\setminus\{1\}$. Then a function $f:I\to\R$ is $n$-increasing if and only if it is $(n-2)$-times continuously differentiable and $f^{(n-2)}$ is convex on $I$.
\end{theorem}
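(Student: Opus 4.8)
The plan is to argue by induction on $n$, the guiding idea being that differentiation lowers the order by one. The base case $n=2$ is the classical fact that $f[x_0,x_1,x_2]\ge0$ for all distinct triples is equivalent to convexity of $f$, together with the elementary observation that a convex function on the open interval $I$ is continuous; thus $f=f^{(0)}$ is continuous (so $C^{0}=C^{n-2}$) and convex, which is exactly the assertion for $n=2$. For the inductive step $n\ge3$ I would treat the two implications separately, the sufficiency being soft and the necessity carrying all the analytic content.

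For sufficiency I would bypass the induction and argue directly by mollification. Assuming $f\in C^{n-2}$ with $f^{(n-2)}$ convex, fix a compact subinterval and set $f_\varepsilon:=f*\rho_\varepsilon$ for a nonnegative smooth mollifier $\rho_\varepsilon$ of unit mass. Then $f_\varepsilon\in C^{\infty}$, and $f_\varepsilon^{(n-2)}=f^{(n-2)}*\rho_\varepsilon$ is again convex, being an average of translates of a convex function, so $f_\varepsilon^{(n)}=(f_\varepsilon^{(n-2)})''\ge0$. The elementary mean value theorem for divided differences in the smooth case gives $f_\varepsilon[x_0,\dots,x_n]=f_\varepsilon^{(n)}(\xi)/n!\ge0$, and letting $\varepsilon\to0$ (so that $f_\varepsilon\to f$ locally uniformly) yields $f[x_0,\dots,x_n]\ge0$, since the divided difference at fixed distinct nodes is a continuous linear functional of the function values.

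The necessity is the heart of the matter, and it is where regularity is gained. Assume the theorem for $n-1$ and let $f$ be $n$-increasing with $n\ge3$. It suffices to prove that $f\in C^{1}$ and that $f'$ is $(n-1)$-increasing, for then the induction hypothesis applied to $f'$ gives $f'\in C^{n-3}$ with $(f')^{(n-3)}=f^{(n-2)}$ convex, i.e.\ $f\in C^{n-2}$ with $f^{(n-2)}$ convex. Two identities drive this. First, the slope identity $f[a,x_0,\dots,x_{n-1}]=g_a[x_0,\dots,x_{n-1}]$ with $g_a(x):=(f(x)-f(a))/(x-a)$ shows that each slope function $g_a$ is $(n-1)$-increasing on $I\setminus\{a\}$; by the induction hypothesis $g_a$ is then continuous on each component, which already yields continuity of $f$. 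Second, fixing $n-2$ distinct nodes $t_1,\dots,t_{n-2}$, the partial divided difference $\phi(x):=f[x,t_1,\dots,t_{n-2}]$ satisfies $\phi[x_0,x_1,x_2]=f[x_0,x_1,x_2,t_1,\dots,t_{n-2}]\ge0$, so $\phi$ is convex; this convex structure, together with the continuity just obtained, is what one leverages to produce finite one-sided derivatives $f'_{-}\le f'_{+}$ that are themselves $(n-1)$-increasing (again via the slope identity). Since $n-1\ge2$, the induction hypothesis forces every $(n-1)$-increasing function to be continuous, and two continuous functions with $f'_{-}\le f'_{+}$ that agree off a countable set must coincide; hence $f'_{-}=f'_{+}=:f'$, so $f\in C^{1}$ and $f'$ is $(n-1)$-increasing.

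The main obstacle is precisely this regularity gain: upgrading an a priori merely continuous $n$-increasing function to a differentiable one, that is, showing that the one-sided limits of the slope function at its excluded base point exist, are finite, and coincide. Finiteness is a genuine consequence of higher-order monotonicity and must be drawn out of it; for example the odd function $x\mapsto\sign(x)\sqrt{|x|}$ has an everywhere-positive third derivative on $\R\setminus\{0\}$ but a vertical tangent at the interior point $0$, and a direct computation of a third-order divided difference over nodes straddling $0$ shows it is not $3$-increasing—so $3$-increasingness genuinely forbids infinite one-sided derivatives. The coincidence of the two one-sided derivatives, which requires $n\ge3$ (and hence the induction-hypothesis continuity of the $(n-1)$-increasing derivative), is exactly what distinguishes the differentiable regime from the genuinely nondifferentiable convex case $n=2$. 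An alternative, more analytic route to the same conclusion combines the mollification above with an Arzel\`a--Ascoli compactness argument: the convex functions $f_\varepsilon^{(n-2)}$ are locally uniformly bounded and locally Lipschitz, so a subsequence converges locally uniformly to a convex limit which, upon integrating back, exhibits $f$ as $C^{n-2}$ with convex $(n-2)$th derivative.
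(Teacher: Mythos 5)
First, a point of reference: the paper does not prove this statement at all — it is quoted as a classical theorem of Popoviciu, with the proof delegated to \cite{Kuczma} (Theorem 15.8.4). So there is no in-paper argument to compare against, and your proposal has to stand on its own. Its architecture (induction on $n$, mollification for sufficiency) is sensible, and the sufficiency half is essentially complete: $f_\varepsilon^{(n-2)}=f^{(n-2)}*\rho_\varepsilon$ is convex, hence $f_\varepsilon^{(n)}\ge 0$, the mean value theorem for divided differences gives $f_\varepsilon[x_0,\dots,x_n]\ge 0$, and one passes to the limit (any fixed node set lies in a compact subinterval, so the shrinking of the domain under mollification is harmless). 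The base case $n=2$ is also fine.

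The necessity direction, however, contains a genuine gap, located exactly where you yourself say the ``heart of the matter'' is. Your reduction rests on three claims: (a) the one-sided derivatives $f'_-(a)$, $f'_+(a)$ exist and are finite at every point; (b) as functions of $a$ they are $(n-1)$-increasing; (c) $f'_-\le f'_+$ and the two agree off a countable set. None of these is proved. For (a) you write that the convexity of $x\mapsto f[x,t_1,\dots,t_{n-2}]$ ``is what one leverages to produce'' finite one-sided derivatives, but the leveraging is never carried out; the standard route is via monotonicity of divided differences in each node (the function $x\mapsto f[x,a,b_1,\dots,b_{n-3}]$ is increasing because its first-order divided differences are $n$th-order divided differences of $f$, hence it has finite one-sided limits at $a$, and one unwinds the recursion to extract finiteness of $f'_\pm(a)$) — this is a real argument, not a remark. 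For (b), ``again via the slope identity'' does not suffice: the slope identity controls divided differences in $x$ of $x\mapsto(f(x)-f(a))/(x-a)$ for \emph{fixed} $a$, whereas (b) concerns divided differences in the variable $a$ of the limit function $a\mapsto f'_+(a)$; pushing a divided difference through $n$ simultaneous coalescences of node pairs needs a confluent-divided-difference argument. For (c), ``agree off a countable set'' is precisely the property of convex functions, where it follows from the monotonicity of $f'_+$; for a general $n$-increasing $f$ you have established no monotonicity of $f'_\pm$ from which countability of the disagreement set would follow. Your fallback Arzel\`a--Ascoli route hides the same crux in the phrase ``locally uniformly bounded'': two-sided bounds on $f_\varepsilon^{(n-2)}$ uniform in $\varepsilon$ must themselves be extracted from divided differences of $f$ (mean value theorem plus convexity of $f_\varepsilon^{(n-2)}$), and one must then still identify the limit with a genuine $(n-2)$nd derivative of $f$. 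In short, the skeleton is right and the difficulty is correctly diagnosed, but the load-bearing regularity step is asserted rather than proved.
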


\begin{theorem}\label{thm:Popoviciu2}
Let $n\in\N$ and let $f:I\to\R$ be $(n-1)$-times continuously differentiable. Then $f$ is $n$-increasing if and only if $f^{(n-1)}$ is increasing on $I$.
\end{theorem}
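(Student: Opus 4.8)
The plan is to reduce the claim to the already-available Theorem \ref{thm:Popoviciu1} together with the standard first-order characterization of convexity for differentiable functions, after separately treating the degenerate exponent. I would first dispose of the base case $n=1$ directly from the definitions: here the hypothesis only requires $f=f^{(0)}$ to be continuous, and for any two distinct points $x_0,x_1\in I$ one has $f[x_0,x_1]=\frac{f(x_1)-f(x_0)}{x_1-x_0}$. Thus $f$ is $1$-increasing precisely when this quotient is nonnegative for all such pairs, which is exactly the statement that $f^{(0)}$ is increasing. This establishes the asserted equivalence for $n=1$.

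For the remaining range $n\ge 2$, I would invoke Theorem \ref{thm:Popoviciu1}. Since $f$ is assumed to be $(n-1)$-times continuously differentiable, it is in particular $(n-2)$-times continuously differentiable, so the smoothness requirement of Theorem \ref{thm:Popoviciu1} is automatically met; that theorem then yields that $f$ is $n$-increasing if and only if $f^{(n-2)}$ is convex on $I$. The task therefore reduces to showing that, for the function $g:=f^{(n-2)}$, convexity of $g$ is equivalent to (weak) monotonicity of $g'=f^{(n-1)}$.

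This last equivalence is the standard $C^1$ criterion for convexity, and it is legitimate to use it here because the differentiability hypothesis on $f$ guarantees that $g=f^{(n-2)}$ is continuously differentiable on $I$. I would include a brief justification of both directions: if $g$ is convex then, for $x<y$ in $I$, comparing the one-sided derivatives at the endpoints with the chord slope gives $g'(x)\le \frac{g(y)-g(x)}{y-x}\le g'(y)$, so $g'$ is increasing; conversely, if $g'$ is increasing, then comparing the secant slopes over two adjacent subintervals via the mean value theorem shows that $g$ lies below its chords, i.e. $g$ is convex. Chaining this with the equivalence from the previous paragraph gives, for $n\ge 2$, that $f$ is $n$-increasing if and only if $f^{(n-1)}$ is increasing, which completes the argument.

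I expect no serious obstacle: essentially all the analytic difficulty has been absorbed into Theorem \ref{thm:Popoviciu1}, whose smoothness conclusion we are merely over-satisfying. The only point genuinely demanding care is the case split at $n=1$, since Theorem \ref{thm:Popoviciu1} is stated only for $n\in\N\setminus\{1\}$ and the object $f^{(n-2)}$ would otherwise carry the meaningless index $-1$; handling that case from first principles, as above, is what makes the reduction clean.
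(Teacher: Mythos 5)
Your argument is correct, but it cannot be compared with ``the paper's own proof'' because the paper offers none: Theorems \ref{thm:Popoviciu1}--\ref{thm:Popoviciu3} are quoted as known results of Popoviciu, with the reader referred to Theorems 15.8.4--15.8.6 of Kuczma's book \cite{Kuczma}. Within that framework your route is legitimate and clean: the case $n=1$ is indeed immediate from $f[x_0,x_1]=\frac{f(x_1)-f(x_0)}{x_1-x_0}$, and for $n\ge 2$ the reduction to Theorem \ref{thm:Popoviciu1} is sound, since the hypothesis that $f\in C^{n-1}(I)$ over-satisfies the smoothness clause of that theorem and collapses the equivalence to ``$f^{(n-2)}$ convex,'' after which the standard $C^1$ criterion (convexity of $g$ on an open interval is equivalent to monotonicity of $g'$) finishes the job; both directions of that criterion are sketched adequately. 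Two remarks on how this differs from the classical treatment. First, the textbook proof is direct: it manipulates divided differences (e.g.\ via the recursion $f[x_0,\dots,x_n]=\frac{f[x_1,\dots,x_n]-f[x_0,\dots,x_{n-1}]}{x_n-x_0}$ and the mean value theorem for divided differences) rather than bootstrapping from the regularity theorem; your version trades that computation for a dependence on Theorem \ref{thm:Popoviciu1}, which is the deepest of the three statements, so it is shorter but logically heavier. Second, your proof is non-circular only because the standard proof of Theorem \ref{thm:Popoviciu1} does not pass through Theorem \ref{thm:Popoviciu2}; it would be worth a sentence acknowledging this, since otherwise a reader might suspect you are deriving a lemma from its own corollary. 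Neither point is a gap.
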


\begin{theorem}\label{thm:Popoviciu3}
Let $n\in\N\cup\{0\}$ and let $f:I\to\R$ be $n$-times continuously differentiable. Then $f$ is $n$-increasing if and only if $f^{(n)}$ is nonnegative on $I$.
\end{theorem}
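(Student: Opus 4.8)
The plan is to derive this theorem directly from Theorem~\ref{thm:Popoviciu2}, thereby reducing it to the elementary relationship between monotonicity and the sign of the derivative. First I would dispose of the degenerate case $n=0$: here $0$-increasingness is, by definition, nonnegativity of $f=f^{(0)}$, so the equivalence is immediate.

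For $n\geq 1$, since $f$ is $n$-times continuously differentiable it is in particular $(n-1)$-times continuously differentiable, so Theorem~\ref{thm:Popoviciu2} applies and yields that $f$ is $n$-increasing if and only if $f^{(n-1)}$ is increasing on $I$. It then remains to invoke the classical calculus fact that a continuously differentiable function $g:=f^{(n-1)}$ on the interval $I$ is increasing if and only if its derivative $g'=f^{(n)}$ is nonnegative on $I$. For the forward implication, monotonicity forces every difference quotient of $g$ to be nonnegative, and passing to the limit gives $g'\geq 0$; for the converse, the Lagrange mean value theorem expresses any increment $g(y)-g(x)$ as $g'(\xi)(y-x)$ with $\xi$ between $x$ and $y$, whence $g$ is increasing. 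Chaining the two equivalences completes the proof.

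The only points requiring care are the use of connectedness of $I$, so that the mean value theorem can be applied across arbitrary pairs of points, and the separate treatment of the base case; there is no genuine obstacle once Theorem~\ref{thm:Popoviciu2} is granted.

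If instead a self-contained argument were preferred, I would appeal to the mean value theorem for divided differences: for $f\in C^n$ and pairwise distinct $x_0,\dots,x_n\in I$ there exists a point $\xi$ in their convex hull with $f[x_0,\dots,x_n]=f^{(n)}(\xi)/n!$. Sufficiency is then immediate, while necessity follows by letting all the nodes tend to a common point $x$ and using continuity of $f^{(n)}$ to obtain $f^{(n)}(x)=\lim n!\,f[x_0,\dots,x_n]\geq 0$. On this route the main work is establishing the divided-difference mean value theorem itself, which is obtained by applying Rolle's theorem $n$ times to a suitable polynomial interpolation remainder.
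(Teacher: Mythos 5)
The paper does not prove this statement: it is imported from Popoviciu, with Kuczma's book cited (Theorem 15.8.6), so there is no in-paper argument to compare against. Your derivation is correct and is essentially the standard one found in that reference: the case $n=0$ is definitional, and for $n\geq 1$ the reduction to Theorem~\ref{thm:Popoviciu2} together with the mean-value-theorem equivalence between monotonicity of the $C^1$ function $f^{(n-1)}$ and nonnegativity of $f^{(n)}$ on the interval $I$ is exactly how 15.8.6 is deduced from 15.8.5, so there is no circularity. Your alternative route via the identity $f[x_0,\dots,x_n]=f^{(n)}(\xi)/n!$ is also sound and self-contained, at the cost of first establishing that identity by repeated application of Rolle's theorem.
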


Furthermore, given $n\in\N\cup\{0\}$, the symbols $M_{n^+}(I)$ and $M_{n^-}(I)$ will represent the set of all  $n$-increasing and $n$-decreasing functions defined on $I$, respectively. 

Let now $X, Y$ be two random variables that take values in $I.$ We say that $X$ is smaller than $Y$ in the $n$-increasing order if, for all $f\in M_{n^+}(I)$,
\begin{equation}
\label{norder}
\E(f(X))\leq \E(f(Y))
\end{equation}
provided that the expectations exist (see \cite{Shaked}). 

In \cite{Shaked} and also in \cite{Rajba}, one can find necessary and sufficient conditions for the $n$-increasing order. 
That is, \eqref{norder} is satisfied if and only if the following two conditions are satisfied.
\begin{equation}
\label{lsn}
\left\{\begin{array}{cc}
    \E X^k=\E Y^k & k\in\{1,\dots,n-1\} \\
     \E(X-t)_+^{n-1}\leq\E(Y-t)_+^{n-1}& t\in I. 
\end{array}\right.    
\end{equation}
Note that here and in the sequel, the symbols $u_-$ and $u_+$ represent the negative and positive part of a real number $u$, respectively, defined as
\[
  u_-:=\max(0,-u)=\frac{|u|-u}{2}\qquad\mbox{and}\qquad u_+:=\max(0,u)=\frac{|u|+u}{2}.
\]

If, however, condition \eqref{lsn} is not fulfilled, the class of functions satisfying the inequality \eqref{norder} is not known. We give a simple example of such a situation.

\begin{example}
For a function $f\colon I\to\R$, consider the functional inequality    
\begin{equation}
\label{1-3}
3f\left(\frac{3x+y}{4}\right)+3f\left(\frac{x+3y}{4}\right)\leq f(x)+4f\left(\frac{x+y}{2}\right)+f(y)
\end{equation}
which is supposed to be valid for all $x,y\in I$ with $x<y$.
At the end of the introduction, we will show that every continuous solution of \eqref{1-3} has to be $2$-increasing, that is, convex. On the other hand, we prove now that convexity is not sufficient for the validity of \eqref{1-3}. Indeed, dividing the inequality by 6 side by side, we can rewrite it in the form \eqref{norder}. Let $x,y\in I$ with $x<y$ be fixed and consider the random variables $X$ and $Y$ defined by
\begin{align*}
 &\mathbb{P}(X=\tfrac{3x+y}{4})
 =\mathbb{P}(X=\tfrac{x+3y}{4})=\frac{1}{2},\\ 
 &\mathbb{P}(Y=x)=\mathbb{P}(Y=y)=\frac{1}{6},\quad
 \mathbb{P}(Y=\tfrac{x+y}{2})=\frac{2}{3}.
\end{align*}
Then, the normalized form of the inequality \eqref{norder} is equivalent to \eqref{1-3}.
The first moments of $X$ and $Y$ are equal to $\frac{x+y}{2}$. Therefore, the first condition in \eqref{lsn} holds for $n=2$. On the other hand, we have 
$$
  \E\Bigl(X-\frac{x+y}{2}\Bigr)_+=\frac18(y-x)>
  \frac1{12}(y-x)=\E\Bigl(Y-\frac{x+y}{2}\Bigr)_+.
$$
This shows that the second condition in \eqref{lsn} is violated. Therefore, we can conclude that not all convex functions satisfy \eqref{1-3}. (In particular, the convex function $u\mapsto \big(u-\frac{x+y}{2}\big)_+$ does not
satisfy \eqref{1-3}.)
\end{example}

This paper aims to provide tools that can be used to deal with inequalities of that kind. The inspiration for our approach may again be found in the monograph
\cite{Shaked}, where it is mentioned (Theorem 4.A.2) that the inequality \eqref{norder} holds for all increasing and convex functions $f$, i.e., for all $f\in M_{1^+}(I)\cap M_{2^+}(I)$, if and only if 
$$\E\bigl((X-t)_+\bigr)\leq \E\bigl((Y-t)_+\bigr)$$
for all $t\in\R.$ 
As we can see, even if an inequality is not satisfied by all functions that are monotone of some order, it may be satisfied if we add additional monotonicity properties of some different orders. 

We will collect all terms to one side of the inequality and consider only a bounded and signed measure $\mu$, defined on the Borel subsets of the interval $[0,1]$. Thus, our main purpose is to study the functional inequality
\begin{equation}
\label{1}
\int_{[0,1]}f\bigl((1-t)x+ty\bigr)d\mu(t)\geq 0, \qquad x,y\in I \mbox{ with } x<y,
\end{equation}
for a continuous unknown function $f:I\to\R$,
and to derive necessary as well as sufficient conditions for its validity in terms of higher-order monotonicity properties of $f$.

In order to formulate our main results, for $k\in\N\cup\{0\}$ and for a given bounded and signed measure $\mu$ on the $\sigma$-algebra of Borel subsets of  $[0,1]$, we introduce the functions 
$\mu_k:\R\to\R$ and 
$\mu^-_k,\mu_k^+:[0,1]\to\R$ by
\begin{align*}
   \mu_k(\tau)&:=\int_{[0,1]}(t-\tau)^kd\mu(t),\\[1mm]
  \mu_k^-(\tau)&:=\int_{[0,\tau)}(t-\tau)^kd\mu(t),\\[1mm]
  \mu_k^+(\tau)&:=\int_{(\tau,1]}(t-\tau)^kd\mu(t).
\end{align*}
Here, in the definition of $\mu_k$, when $k=0$ and $t=\tau$, we adopt the convention $0^0:=1$.
Observe that $\mu_k^-(0)=\mu_k^+(1)=0$. Furthermore, $\mu_k(0)$ equals the $k$th-order moment of the measure $\mu$. A relationship among these functions will be established in Lemma~\ref{Lem:mk} in the next section.

The main result of \cite{BesPal} is recalled in the following theorem.
\begin{theorem}\label{Thm:BP} 
    Let $\mu$ be a nonzero bounded signed Borel measure on $[0,1]$. 
    Assume that $n$ is the smallest non-negative integer such that $\mu_n(0)\neq 0.$ If $f:I\to\R$ is a continuous function satisfying the integral inequality 
	\eqref{1}, then $\mu_n(0)\cdot f$ is $n$-increasing.
\end{theorem}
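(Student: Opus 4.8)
The plan is to reduce to smooth $f$ by mollification, to read off the sign of $f^{(n)}$ from a local expansion that exploits the vanishing of the lower moments $\mu_0(0)=\cdots=\mu_{n-1}(0)=0$, and then to return to the continuous case by a limiting argument on divided differences. First I would fix a nonnegative mollifier $\phi_\varepsilon$ supported in $[-\varepsilon,\varepsilon]$ with total integral $1$, and set $f_\varepsilon:=f*\phi_\varepsilon$, a $C^\infty$ function on the open interval $I_\varepsilon:=\{u\in\R:[u-\varepsilon,u+\varepsilon]\subseteq I\}$. Using the identity $(1-t)x+ty-s=(1-t)(x-s)+t(y-s)$ together with Fubini's theorem (applicable since $\mu$ is bounded, $f$ is continuous hence locally bounded, and $\phi_\varepsilon$ is compactly supported), for $x<y$ in $I_\varepsilon$ I would obtain
\begin{equation*}
\int_{[0,1]}f_\varepsilon\bigl((1-t)x+ty\bigr)\,d\mu(t)
=\int_{\R}\phi_\varepsilon(s)\left(\int_{[0,1]}f\bigl((1-t)(x-s)+t(y-s)\bigr)\,d\mu(t)\right)ds.
\end{equation*}
Since $x-s<y-s$ both lie in $I$ whenever $|s|\le\varepsilon$, the inner integral is nonnegative by \eqref{1}; as $\phi_\varepsilon\ge0$, this shows that $f_\varepsilon$ again satisfies \eqref{1}, now on $I_\varepsilon$.

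Next I would localize. Fixing $c\in I_\varepsilon$ and taking $x=c$, $y=c+h$ for small $h>0$ with $c+h\in I_\varepsilon$, Taylor's theorem with Lagrange remainder gives
\begin{equation*}
f_\varepsilon(c+th)=\sum_{j=0}^{n-1}\frac{f_\varepsilon^{(j)}(c)}{j!}(th)^j+\frac{f_\varepsilon^{(n)}(\xi_{t,h})}{n!}(th)^n,
\end{equation*}
with $\xi_{t,h}$ between $c$ and $c+th$. Integrating against $d\mu$ and recalling that $\mu_j(0)=\int_{[0,1]}t^j\,d\mu(t)$ vanishes for $0\le j\le n-1$ by the minimality of $n$, every lower-order term disappears, leaving
\begin{equation*}
0\le\int_{[0,1]}f_\varepsilon(c+th)\,d\mu(t)=\frac{h^n}{n!}\int_{[0,1]}f_\varepsilon^{(n)}(\xi_{t,h})\,t^n\,d\mu(t).
\end{equation*}
Dividing by $h^n>0$ and letting $h\to0^+$, the uniform continuity of $f_\varepsilon^{(n)}$ near $c$ (with $\xi_{t,h}\to c$ uniformly in $t$ and $\mu$ bounded) yields $\mu_n(0)f_\varepsilon^{(n)}(c)\ge0$. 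As $c\in I_\varepsilon$ is arbitrary and $\mu_n(0)f_\varepsilon$ is $n$-times continuously differentiable, Theorem~\ref{thm:Popoviciu3} shows that $\mu_n(0)f_\varepsilon$ is $n$-increasing on $I_\varepsilon$.

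Finally, for any pairwise distinct $x_0,\dots,x_n\in I$, all of them lie in $I_\varepsilon$ once $\varepsilon$ is small enough, so $\mu_n(0)f_\varepsilon[x_0,\dots,x_n]\ge0$; because $f_\varepsilon\to f$ uniformly on compact subsets of $I$ and the divided difference is a fixed finite linear combination of the values $f_\varepsilon(x_0),\dots,f_\varepsilon(x_n)$, passing to the limit $\varepsilon\to0^+$ gives $\mu_n(0)f[x_0,\dots,x_n]\ge0$, i.e.\ $\mu_n(0)f$ is $n$-increasing. I expect the main obstacle to be technical rather than conceptual: carefully justifying the Fubini step that transports \eqref{1} to $f_\varepsilon$, and controlling the uniform passage to the limit as $h\to0^+$. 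The conceptual core --- that the vanishing of $\mu_0(0),\dots,\mu_{n-1}(0)$ forces the leading local behaviour of the functional in \eqref{1} to be governed precisely by $f^{(n)}$ --- is exactly what drives the argument.
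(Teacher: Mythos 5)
Your argument is correct, but note that the paper itself gives no proof of Theorem~\ref{Thm:BP}: it is quoted verbatim from \cite{BesPal}, so there is no internal proof to compare against. What you have produced is a valid self-contained proof, and it runs on exactly the same machinery the paper deploys for Theorem~\ref{sufth}: mollify $f$ so that the inequality \eqref{1} is inherited by $f_\varepsilon$ on $I_\varepsilon$ (your Fubini computation is the ``dual'' of Lemma~\ref{fepsilon}, transporting the inequality rather than the monotonicity), Taylor-expand along the segment, kill the lower-order terms with the vanishing moments $\mu_0(0)=\cdots=\mu_{n-1}(0)=0$, identify the sign of the leading coefficient via Theorem~\ref{thm:Popoviciu3}, and pass to the limit in the divided differences. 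The only point worth tightening is the Lagrange remainder: $\xi_{t,h}$ need not be chosen measurably in $t$, so one should either use the integral form of the remainder or observe that the remainder $R_h(t):=f_\varepsilon(c+th)-\sum_{j=0}^{n-1}\frac{f_\varepsilon^{(j)}(c)}{j!}(th)^j$ is itself continuous in $t$ and satisfies $\bigl|R_h(t)-\frac{(th)^n}{n!}f_\varepsilon^{(n)}(c)\bigr|\leq\frac{h^n}{n!}\,\omega(h)$ with $\omega$ the modulus of continuity of $f_\varepsilon^{(n)}$ near $c$; either fix makes the passage $h\to0^+$ rigorous. With that cosmetic repair the proof is complete.
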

\begin{remark}
Considering a measure 
$$
\mu=\delta_0-3\delta_{\frac14}+4\delta_{\frac12}-3\delta_{\frac34}+\delta_1,
$$
we may rewrite inequality \eqref{1-3} in the form \eqref{1}. Since we have
$\mu_0(0)=\mu_1(0)=0$ and $\mu_2(0)=\frac{7}{16},$ according to Theorem \ref{Thm:BP}, we find that every continuous solution of \eqref{1-3} is a $2$-increasing (i.e., a convex) function. 
As we remember, not all convex functions satisfy  \eqref{1-3}. Thus, we need to find some additional conditions to the convexity to obtain a class of functions that satisfy \eqref{1-3}. We will return to this inequality in the last part of the paper.   
\end{remark}

\section{Auxiliary results}

The following lemma establishes a recursive formula for the sequences of functions $(\mu_k^-)_{k\geq0}$ and $(\mu_k^+)_{k\geq0}$. In what follows, the characteristic function of a set $A\subseteq\R$ will be denoted by
$\chi_A$.

\begin{lemma}\label{Lem:mk}
Let $\mu$ be a nonzero bounded signed Borel measure on $[0,1]$. Then, for all $k\in\N$ and $\tau\in[0,1]$, we have
\begin{align}\label{Eq:mk-}
    \mu_k^-(\tau)&=-k\int_0^\tau\mu_{k-1}^-(t)dt,\\[1mm]
    \mu_k^+(\tau)&=k\int_\tau^1\mu_{k-1}^+(t)dt.\label{Eq:mk+}
\end{align}
\end{lemma}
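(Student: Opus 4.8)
The plan is to prove both identities by recognizing the right-hand sides as iterated integrals and applying the Fubini--Tonelli theorem to interchange the order of integration; I treat the identity \eqref{Eq:mk-} for $\mu_k^-$ in detail, since \eqref{Eq:mk+} for $\mu_k^+$ follows by an entirely symmetric argument. Fix $k\in\N$ and $\tau\in[0,1]$. Substituting the definition of $\mu_{k-1}^-$ into the right-hand side of \eqref{Eq:mk-}, I would write
\[
-k\int_0^\tau\mu_{k-1}^-(s)\,ds
=-k\int_0^\tau\Bigl(\int_{[0,s)}(t-s)^{k-1}\,d\mu(t)\Bigr)ds,
\]
which is an integral over the triangular region $\{(s,t):0\le t<s\le\tau\}$ against the product of Lebesgue measure in $s$ and $\mu$ in $t$.

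First I would justify the interchange. Since $\mu$ is bounded, its total variation $|\mu|$ is a finite measure, and the integrand satisfies $|(t-s)^{k-1}|\le1$ on $[0,1]^2$; hence the absolute integral is dominated by $|\mu|([0,1])\cdot\tau<\infty$, so Fubini's theorem applies with respect to $|\mu|\otimes ds$. Interchanging the order, for a fixed $t\in[0,\tau)$ the variable $s$ ranges over $(t,\tau]$, and the expression becomes
\[
-k\int_{[0,\tau)}\Bigl(\int_t^\tau(t-s)^{k-1}\,ds\Bigr)d\mu(t).
\]
Using that $-\tfrac1k(t-s)^k$ is an antiderivative of $(t-s)^{k-1}$ in $s$, the inner integral evaluates to $-\tfrac1k(t-\tau)^k$ (the contribution at $s=t$ vanishes since $k\ge1$), so the whole expression collapses to $\int_{[0,\tau)}(t-\tau)^k\,d\mu(t)=\mu_k^-(\tau)$, as claimed. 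The computation for \eqref{Eq:mk+} is identical after replacing the region by $\{(s,t):\tau\le s<t\le1\}$; there the inner integral $\int_\tau^t(t-s)^{k-1}\,ds$ equals $+\tfrac1k(t-\tau)^k$, which cancels the prefactor $k$ and reproduces $\mu_k^+(\tau)$.

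The only genuinely delicate point is the justification of the interchange in the presence of a signed measure, and here it is handled cleanly by passing to $|\mu|$; no boundary terms arise because the inner region is an open triangle whose diagonal $t=s$ carries no Lebesgue mass in $s$. I would avoid the alternative strategy of differentiating both sides in $\tau$: although both sides vanish at $\tau=0$ and one formally obtains matching derivatives $-k\mu_{k-1}^-(\tau)$, this route requires differentiating under the integral sign across a moving endpoint and controlling possible atoms of $\mu$ (already $\mu_0^-(\tau)=\mu([0,\tau))$ may be discontinuous), so establishing the identity for \emph{every} $\tau$, rather than merely almost everywhere, is more awkward than the direct Fubini computation.
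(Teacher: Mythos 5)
Your proof is correct and follows essentially the same route as the paper's: an application of Fubini's theorem over the relevant triangular region followed by evaluation of the inner integral via an antiderivative of $(t-s)^{k-1}$. The only (cosmetic) differences are that you treat all $k\ge 1$ uniformly where the paper separates the cases $k=1$ and $k>1$, and that you spell out the justification of the interchange via the total variation $|\mu|$, which the paper leaves implicit.
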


\begin{proof} 
We are going to prove the equality \eqref{Eq:mk-} for $k=1$ and for $\tau\in[0,1]$. Applying Fubini's theorem and the Newton--Leibniz formula, we obtain 
\begin{align*}
  \int_0^\tau\mu_0^-(t)dt
  &=\int_0^\tau\bigg(\int_{[0,t)}1d\mu(s)\bigg)dt
  =\int_0^\tau\bigg(\int_{[0,1]}\chi_{[0,t)}(s)d\mu(s)\bigg)dt\\
  &=\int_{[0,1]}\bigg(\int_0^\tau\chi_{[0,t)}(s)dt\bigg)d\mu(s)
  =\int_{[0,1]}\bigg(\int_0^\tau\chi_{(s,1]}(t)dt\bigg)d\mu(s)\\
  &=\int_{[0,1]}(\tau-s)_+\mu(s)
  =\int_{[0,\tau)}(s-\tau)\mu(s)=-\mu_1^-(\tau).
\end{align*}

For $k>1$ and for $\tau\in[0,1]$, using Fubini's theorem again, we get
\begin{align*}
  \int_0^\tau\mu_{k-1}^-(t)dt
  &=\int_0^\tau\bigg(\int_{[0,t)}(s-t)^{k-1} d\mu(s)\bigg)dt
  =\int_0^\tau\bigg(\int_{[0,1]}(-(t-s)_+)^{k-1} d\mu(s)\bigg)dt\\
  &=\int_{[0,1]}\bigg(\int_0^\tau(-(t-s)_+)^{k-1} dt\bigg)d\mu(s)
  =\int_{[0,1]}\bigg[-\frac1k(-(t-s)_+)^k \bigg]_{t=0}^{t=\tau}d\mu(s)\\
  &=\int_{[0,1]}\bigg(-\frac1k(-(\tau-s)_+)^k+\frac1k(-(0-s)_+)^k \bigg)d\mu(s)\\
  &=\int_{[0,1]}-\frac1k(-(\tau-s)_+)^kd\mu(s)
  =-\int_{[0,\tau)}-\frac1k(s-\tau)^kd\mu(s)=-\frac1k\mu_k^+(\tau),
\end{align*}
which completes the proof of the equality \eqref{Eq:mk-} for $k>1$.

To prove the equality \eqref{Eq:mk+} for $k=1$ and for $\tau\in[0,1]$, using Fubini's theorem and the Newton--Leibniz formula, we obtain 
\begin{align*}
  \int_\tau^1\mu_0^+(t)dt
  &=\int_\tau^1\bigg(\int_{(t,1]}1d\mu(s)\bigg)dt
  =\int_\tau^1\bigg(\int_{[0,1]}\chi_{(t,1]}(s)d\mu(s)\bigg)dt\\
  &=\int_{[0,1]}\bigg(\int_\tau^1\chi_{(t,1]}(s)dt\bigg)d\mu(s)
  =\int_{[0,1]}\bigg(\int_\tau^1\chi_{[0,s)}(t)dt\bigg)d\mu(s)\\
  &=\int_{[0,1]}(s-\tau)_+\mu(s)
  =\int_{(\tau,1]}(s-\tau)\mu(s)=\mu_1^+(\tau).
\end{align*}
For $k>1$ and for $\tau\in[0,1]$, using Fubini's theorem again, we get
\begin{align*}
  \int_\tau^1\mu_{k-1}^+(t)dt
  &=\int_\tau^1\bigg(\int_{(t,1]}(s-t)^{k-1} d\mu(s)\bigg)dt
  =\int_\tau^1\bigg(\int_{[0,1]}(s-t)_+^{k-1} d\mu(s)\bigg)dt\\
  &=\int_{[0,1]}\bigg(\int_\tau^1(s-t)_+^{k-1} dt\bigg)d\mu(s)\
  =\int_{[0,1]}\bigg[-\frac1k(s-t)_+^k \bigg]_{t=\tau}^{t=1}d\mu(s)\\
  &=\int_{[0,1]}\bigg(-\frac1k(s-1)_+^k+\frac1k(s-\tau)_+^k \bigg)d\mu(s)
  =\int_{[0,1]}\frac1k(s-\tau)_+^kd\mu(s)=\frac1k\mu_k^+(\tau),
\end{align*}
which completes the proof of the equality \eqref{Eq:mk+} for $k>1$.
\end{proof}

Before presenting sufficient conditions, we will need a lemma connected to a smoothing technique that will be used later. We use here a similar approach to that used in the paper \cite{BesPal}. Namely, let  
$h:\R\to\R$ be a fixed nonnegative function of class $C^\infty$ such that $h(u)=0$ for $u\not\in(-1,1)$ and 
$$\int_{-1}^1h(t)dt=1.$$
Then, for a given $\varepsilon>0$, define the function $h_\varepsilon\colon\R\to\R$ by
$$
h_\varepsilon(t):=\frac{1}{\varepsilon}h\left(\frac{t}{\varepsilon}\right),\qquad t\in\R.
$$
Let $f:I\to\R$ be a given continuous function. Put $I_\varepsilon=(I-\varepsilon)\cap(I+\varepsilon)$ and  define $f_\varepsilon:I_\varepsilon\to\R$ as the convolution of $f$ with $h$ as follows
\begin{equation}
\label{fepsdef}
f_\varepsilon(t):=\int_{-\varepsilon}^\varepsilon f(t-s)h(s)ds,\qquad t\in I_\varepsilon.
\end{equation}
It is well-known (see, for example, \cite[$\mathsection$ 18.14]{Zeidler}) that such function is of the class $C^\infty$ and tends to $f$ as $\varepsilon$ tends to zero (moreover, the convergence is uniform) on any compact subinterval of the interior of $I$. In the following lemma, we will establish that $f_\varepsilon$ also inherits the monotonicity properties of the function $f.$

\begin{lemma}
\label{fepsilon}
Let $f\colon I\to\R$ be a continuous function. If $f$ is $n$-increasing on $I$ for some $n\in\N\cup\{0\}$, then, for all $\varepsilon>0$, the function $f_\varepsilon$ given by \eqref{fepsdef} is $n$-increasing on $I_\varepsilon.$
\end{lemma}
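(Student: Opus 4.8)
The plan is to reduce the $n$-increasingness of $f_\varepsilon$ to that of $f$ by exploiting two features of the divided-difference operator: its linearity in $f$ and its invariance under translation of the nodes. First I would fix pairwise distinct points $x_0,\dots,x_n\in I_\varepsilon$ and insert the defining formula \eqref{fepsdef} into the divided difference $f_\varepsilon[x_0,\dots,x_n]$. Since this divided difference is a finite linear combination of the values $f_\varepsilon(x_0),\dots,f_\varepsilon(x_n)$, the finite sum may be interchanged with the integral over $[-\varepsilon,\varepsilon]$ without any convergence concern.

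The key step is to recognize the resulting integrand as a shifted divided difference of $f$. Writing $x_j-x_k=(x_j-s)-(x_k-s)$ in every factor of the denominators shows that, for each fixed $s$,
\begin{equation*}
\sum_{j=0}^n\frac{f(x_j-s)}{\prod_{k=0,\,k\neq j}^n(x_j-x_k)}
=f[x_0-s,\dots,x_n-s],
\end{equation*}
so that
\begin{equation*}
f_\varepsilon[x_0,\dots,x_n]
=\int_{-\varepsilon}^\varepsilon f[x_0-s,\dots,x_n-s]\,h(s)\,ds.
\end{equation*}
This identity is the heart of the argument: it expresses a divided difference of the mollified function as a nonnegatively weighted average of divided differences of $f$ itself.

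To finish, I would check that each term under the integral is nonnegative. The membership $x_j\in I_\varepsilon=(I-\varepsilon)\cap(I+\varepsilon)$ means precisely that both $x_j-\varepsilon$ and $x_j+\varepsilon$ lie in $I$, hence $[x_j-\varepsilon,x_j+\varepsilon]\subseteq I$ because $I$ is an interval; consequently $x_j-s\in I$ for every $s\in[-\varepsilon,\varepsilon]$. Translation also preserves distinctness, so $x_0-s,\dots,x_n-s$ are pairwise distinct points of $I$, and the $n$-increasingness of $f$ yields $f[x_0-s,\dots,x_n-s]\geq 0$. Since $h\geq 0$ by assumption, the integrand is nonnegative, and therefore $f_\varepsilon[x_0,\dots,x_n]\geq 0$, as required. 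The only points demanding care are the node-translation identity for the denominators and the verification that the shifted nodes remain inside $I$; both become transparent once the definition of $I_\varepsilon$ is unwound, so I do not anticipate a genuine obstacle here.
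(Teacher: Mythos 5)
Your proposal is correct and follows essentially the same route as the paper: interchange the finite sum defining the divided difference with the convolution integral, recognize the integrand as the translated divided difference $f[x_0-s,\dots,x_n-s]$, and conclude from the nonnegativity of the mollifier. Your added verification that the shifted nodes $x_j-s$ remain in $I$ (via the definition of $I_\varepsilon$) is a detail the paper leaves implicit, but the argument is the same.
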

\begin{proof} 
Assume that $f$ is $n$-increasing on $I$ for some $n\in\N\cup\{0\}$. Let $x_0,\dots,x_n$ be pairwise distinct elements of $I_\varepsilon$.
Then, by the $n$-increasingness of $f$, for all $s\in(-\varepsilon,\varepsilon)$, we have that
$$
  f[x_0-s,\dots,x_n-s]\geq0.
$$
Therefore, using the defining formula of $n$th-order divided differences, the definition of $f_\varepsilon$ and the nonnegativity of $h_\varepsilon$, we get
\begin{align*}
f_\varepsilon[x_0,\dots,x_n]
&=\sum_{i=0}^n\frac{f_\varepsilon (x_i)}{\prod_{j=0,j\neq i}^n(x_i-x_j)}\\
&=\sum_{i=0}^n\frac{1}{\prod_{j=0,j\neq i}^n(x_i-x_j)}\int_{-\varepsilon}^\varepsilon f(x_i-s)h_{\varepsilon}(s)ds\\
&=\int_{-\varepsilon}^\varepsilon\sum_{i=0}^n\frac{f(x_i-s)}{\prod_{j=0,j\neq i}^n(x_i-x_j)} h_{\varepsilon}(s)ds\\
&=\int_{-\varepsilon}^\varepsilon\sum_{i=0}^n\frac{f(x_i-s)}{\prod_{j=0,j\neq i}^n\bigl((x_i-s)-(x_j-s)\bigr)} h_{\varepsilon}(s)ds\\
&=\int_{-\varepsilon}^\varepsilon f[x_0-s,\dots,x_n-s] h_{\varepsilon}(s)dv\geq 0.
\end{align*}
This completes the proof of the $n$-increasingness of $f_\varepsilon$ on $I_\varepsilon$.
\end{proof}

\section{Sufficient conditions for the inequality \eqref{1}}

The next theorem presents one of the main results of this paper. 

\begin{theorem}\label{sufth}
Let $n\in\N$, let $\mu$ be a bounded signed Borel measure on $[0,1]$, and let $\lambda\in [0,1]$.
Assume that $f\colon I\to\R$ is a continuous function possessing the following properties:
\begin{enumerate}[(a)]
\item For all $k\in\{0,\dots,n-1\}$, the function 
$\mu_k(\lambda)\cdot f$ is $k$-increasing. 
\item We have one of the following two possibilities: 
\begin{enumerate}[(i)]
\item Either $f$ is $n$-increasing and the following inequalities hold:
\[
\qquad\qquad\mu_{n-1}^-(\tau)\leq0\mbox{ for }\tau\in[0,\lambda)
\quad\mbox{and}\quad
\mu_{n-1}^+(\tau)\geq0\mbox{ for }\tau\in(\lambda,1],
\]
\item or $f$ is $n$-decreasing and the following inequalities hold:
\[
\qquad\qquad\mu_{n-1}^-(\tau)\geq0\mbox{ for }\tau\in[0,\lambda)
\quad\mbox{and}\quad
\mu_{n-1}^+(\tau)\leq0\mbox{ for }\tau\in(\lambda,1].
\]
\end{enumerate}
\end{enumerate}
Then $f$ satisfies the inequality \eqref{1}.
\end{theorem}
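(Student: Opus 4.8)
The plan is to reduce to a smooth $f$ via Lemma~\ref{fepsilon} and then to Taylor-expand the relevant one-variable function about $\lambda$, integrating the expansion against $\mu$. First I would argue that it suffices to treat $f\in C^n$. Given a continuous $f$ satisfying (a) and (b), consider its mollification $f_\varepsilon$ defined in \eqref{fepsdef}. Convolution is linear, so $\mu_k(\lambda)f_\varepsilon=(\mu_k(\lambda)f)_\varepsilon$, and Lemma~\ref{fepsilon} converts the $k$-increasingness of $\mu_k(\lambda)f$ into that of $\mu_k(\lambda)f_\varepsilon$, yielding (a) for $f_\varepsilon$. Applying Lemma~\ref{fepsilon} to $f$ (resp. to $-f$) preserves the $n$-increasing (resp. $n$-decreasing) property required in (b), while the sign hypotheses on $\mu_{n-1}^\pm$ do not involve $f$ at all. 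Hence $f_\varepsilon$ satisfies all hypotheses on $I_\varepsilon$; granting the smooth case, $f_\varepsilon$ obeys \eqref{1} there. Fixing $x<y$ in $I$, we have $[x,y]\subset I_\varepsilon$ for small $\varepsilon$, and since $f_\varepsilon\to f$ uniformly on $[x,y]$ and $\mu$ is bounded, the inequality \eqref{1} passes to the limit $\varepsilon\to0$.

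So assume $f\in C^n$, fix $x<y$ in $I$, set $\phi(t):=(1-t)x+ty$ and $F:=f\circ\phi$, so that $F^{(j)}(t)=(y-x)^jf^{(j)}(\phi(t))$. Expanding $F$ about $\lambda$ with the integral form of the remainder of order $n$ and integrating against $\mu$, using $\int_{[0,1]}(t-\lambda)^j\,d\mu(t)=\mu_j(\lambda)$, I would obtain
\[
\int_{[0,1]}F(t)\,d\mu(t)=\sum_{j=0}^{n-1}\frac{F^{(j)}(\lambda)}{j!}\mu_j(\lambda)+\frac{1}{(n-1)!}\int_{[0,1]}\bigg(\int_\lambda^t(t-s)^{n-1}F^{(n)}(s)\,ds\bigg)d\mu(t).
\]
The $j$-th term of the sum equals $\frac{(y-x)^j}{j!}\bigl(\mu_j(\lambda)f^{(j)}(\phi(\lambda))\bigr)$. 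By (a) the function $\mu_j(\lambda)f$ is $j$-increasing, so Theorem~\ref{thm:Popoviciu3} gives $\mu_j(\lambda)f^{(j)}\ge0$ on $I$; as $(y-x)^j>0$, every summand is nonnegative.

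It remains to show the remainder term is nonnegative, which is where the main work lies. I would evaluate the double integral by Fubini (legitimate since $F^{(n)}$ is continuous and $\mu$ bounded), rewriting it as $\frac{1}{(n-1)!}\int_0^1F^{(n)}(s)G(s)\,ds$ with $G(s)=\int_{[0,1]}K(t,s)\,d\mu(t)$, where $K(t,s)=(t-s)^{n-1}$ carries the sign and the cutoff coming from the oriented inner integral $\int_\lambda^t$. The delicate point is that for $t<\lambda$ this integral reverses orientation, so $(t-s)^{n-1}$ enters with a minus sign on $\{t\le s\le\lambda\}$. Carrying out this bookkeeping and comparing with the definitions of $\mu_{n-1}^\pm$, one finds (for all but the countably many atoms of $\mu$, which are Lebesgue-null in $s$)
\[
G(s)=-\mu_{n-1}^-(s)\quad(s\in[0,\lambda)),\qquad G(s)=\mu_{n-1}^+(s)\quad(s\in(\lambda,1]).
\]
Thus the remainder equals
\[
\frac{1}{(n-1)!}\bigg(\int_0^\lambda F^{(n)}(s)\bigl(-\mu_{n-1}^-(s)\bigr)\,ds+\int_\lambda^1 F^{(n)}(s)\mu_{n-1}^+(s)\,ds\bigg).
\]

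Finally I would check the signs. In case (b)(i), $f$ is $n$-increasing, so $F^{(n)}=(y-x)^nf^{(n)}\circ\phi\ge0$ by Theorem~\ref{thm:Popoviciu3}, while $-\mu_{n-1}^-\ge0$ on $[0,\lambda)$ and $\mu_{n-1}^+\ge0$ on $(\lambda,1]$; both integrands are nonnegative. In case (b)(ii) all three sign factors reverse simultaneously, so the two products remain nonnegative. Hence the remainder is nonnegative, and adding the nonnegative polynomial part establishes \eqref{1}. The main obstacle is precisely the Fubini interchange together with the sign-and-orientation bookkeeping that identifies the inner integral as $\mp\mu_{n-1}^\mp(s)$; once this is in place, hypotheses (a) and (b) supply exactly the signs needed.
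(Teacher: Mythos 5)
Your proposal is correct and follows essentially the same route as the paper's proof: a Taylor expansion with integral remainder (yours in the parameter $t$ about $\lambda$, the paper's equivalently in $u$ about $p=(1-\lambda)x+\lambda y$), Popoviciu's characterization to handle the polynomial part via (a), a Fubini interchange identifying the remainder kernel with $-\mu_{n-1}^-$ and $\mu_{n-1}^+$, and mollification via Lemma~\ref{fepsilon} to remove the smoothness assumption. The sign-and-orientation bookkeeping you describe, including the harmless ambiguity at atoms of $\mu$, matches the paper's computation.
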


\begin{proof}
Take $x<y$ from the interior of $I$. First we assume that  $f\in C^\infty([x,y]).$
Then, using Taylor's theorem at the point $p:=(1-\lambda)x+\lambda y$ with the integral remainder term, for every $u\in[x,y]$, we have
$$
f(u)=\sum_{k=0}^{n-1}\frac{f^{(k)}(p)}{k!}(u-p)^k+\int_{p}^u \frac{f^{(n)}(s)}{(n-1)!}(u-s)^{n-1}ds.
$$
Therefore, substituting $u:=(1-t)x+ty$ into the above equality and then integrating it with respect to $\mu$, we get
\begin{align}
\label{intf}
 &\int_{[0,1]}f\bigl((1-t)x+ty\bigr)d\mu(t)\\
 &=\sum_{k=0}^{n-1}\frac{f^{(k)}(p)}{k!}\int_{[0,1]}\bigl((1-t)x+ty-p\bigr)^k d\mu(t)\nonumber\\
 &\quad +\int_{[0,1]}\left(\int_p^{(1-t)x+ty}\frac{f^{(n)}(s)}{(n-1)!}\bigl((1-t)x+ty-s\bigr)^{n-1}ds\right)d\mu(t).\nonumber
\end{align}
Let $k\in\{0,\dots,n-1\}$ be fixed. Then
\begin{align*}
\int_{[0,1]}\bigl((1-t)x&+ty-p\bigr)^kd\mu(t) \\
&=\int_{[0,1]}\bigl((1-t)x+ty-((1-\lambda)x+\lambda y)\bigr)^kd\mu(t)\\
&=(y-x)^k\int_{[0,1]}(t-\lambda)^kd\mu(t)=(y-x)^k\cdot\mu_k(\lambda).
\end{align*}
Combining condition \textit{(a)} with Theorem \ref{thm:Popoviciu3}, we obtain that 
$$
\mu_k(\lambda)\cdot f^{(k)}(p)\geq 0.
$$
Thus, we conclude that
\begin{align}\label{sum}
\sum_{k=0}^{n-1}\frac{f^{(k)}(p)}{k!}&\int_{[0,1]}\bigl((1-t)x+ty-p\bigr)^kd\mu(t)\\
&=\sum_{k=0}^{n-1}\frac{f^{(k)}(p)}{k!}(y-x)^k\cdot\mu_k(\lambda)\geq0.\nonumber
\end{align}

To obtain the nonnegativity of the second term on the right-hand side of \eqref{intf}, we will use condition \textit{(b)}. Without loss of generality, we assume that condition \textit{(i)} holds. Then, by Theorem~\ref{thm:Popoviciu3}, we have that $f^{(n)}$ is nonnegative on $I$. We split the second term on the right-hand side of \eqref{intf} as follows:
\begin{align}\label{split}
\int_{[0,1]}&\left(\int_p^{(1-t)x+ty}\frac{f^{(n)}(s)}{(n-1)!}\bigl((1-t)x+ty-s\bigr)^{n-1}ds\right)d\mu(t)\\
&=\int_{[0,\lambda]}\left(\int_{p}^{(1-t)x+ty}\frac{f^{(n)}(s)}{(n-1)!}\bigl((1-t)x+ty-s\bigr)^{n-1}ds\right)d\mu(t)\nonumber\\
&\quad+\int_{(\lambda,1]}\left(\int_{p}^{(1-t)x+ty}\frac{f^{(n)}(s)}{(n-1)!}\bigl((1-t)x+ty-s\bigr)^{n-1}ds\right)d\mu(t).\nonumber
\end{align}
Now, we compute the two integrals on the right-hand side separately.
For $t\in[0,\lambda]$, we have that $x\leq (1-t)x+ty\leq (1-\lambda)x+\lambda y=p$, therefore, by using Fubini's theorem, we get
\begin{align*}
\int_{[0,\lambda]}&\bigg(\int_{p}^{(1-t)x+ty}\frac{f^{(n)}(s)}{(n-1)!}\bigl((1-t)x+ty-s\bigr)^{n-1}ds\bigg)d\mu(t)\\
&=\int_{[0,\lambda]}\left(-\int_{(1-t)x+ty}^{p}\frac{f^{(n)}(s)}{(n-1)!}\bigl((1-t)x+ty-s\bigr)^{n-1}ds\right)d\mu(t)\\
&=-\int_{[0,\lambda]}\left(\int_x^p\chi_{((1-t)x+ty,p]}(s) \cdot \frac{f^{(n)}(s)}{(n-1)!}\bigl((1-t)x+ty-s\bigr)^{n-1}ds\right)d\mu(t)\\
&=-\int_x^p\left(\int_{[0,\lambda]}\chi_{((1-t)x+ty,p]}(s) \cdot \frac{f^{(n)}(s)}{(n-1)!}\bigl((1-t)x+ty-s\bigr)^{n-1}d\mu(t)\right)ds\\
&=-\int_x^p\bigg(\int_{[0,\frac{s-x}{y-x})}\frac{f^{(n)}(s)}{(n-1)!}\bigl((1-t)x+ty-s\bigr)^{n-1}d\mu(t)\bigg)ds\\
&=\int_x^p\frac{f^{(n)}(s)}{(n-1)!}(y-x)^{n-1}\bigg(-\int_{\left[0,\frac{s-x}{y-x}\right)}\left(t-\frac{s-x}{y-x}\right)^{n-1}d\mu(t)\bigg)ds\\
&=\int_x^p\frac{f^{(n)}(s)}{(n-1)!}(y-x)^{n-1}\cdot(-1)\cdot\mu_{n-1}^-\left(\frac{s-x}{y-x}\right)ds\geq0.
\end{align*}
To see that the last inequality is valid, observe that, for $s\in[x,p)$, the ratio $\tau:=\frac{s-x}{y-x}$ belongs to $[0,\lambda)$ and hence, by condition \textit{(i)}, we have that $\mu_{n-1}^-(\tau)\leq0$.

For $t\in (\lambda,1]$, we have that $p=(1-\lambda)x+\lambda y<(1-t)x+ty\leq y$; therefore, by using Fubini's theorem again, we obtain
\begin{align*}
\int_{(\lambda,1]}&\bigg(\int_{p}^{(1-t)x+ty}\frac{f^{(n)}(s)}{(n-1)!}\bigl((1-t)x+ty-s\bigr)^{n-1}ds\bigg)d\mu(t)\\
&=\int_{(\lambda,1]}\left(\int_p^{y}\chi_{[p,(1-t)x+ty)}(s) \cdot \frac{f^{(n)}(s)}{(n-1)!}\bigl((1-t)x+ty-s\bigr)^{n-1}ds\right)d\mu(t)\\
&=\int_p^{y}\left(\int_{(\lambda,1]}\chi_{[p,(1-t)x+ty)}(s) \cdot \frac{f^{(n)}(s)}{(n-1)!}\bigl((1-t)x+ty-s\bigr)^{n-1}d\mu(t)\right)ds\\
&=\int_p^{y}\bigg(\int_{\left(\frac{s-x}{y-x},1\right]}\frac{f^{(n)}(s)}{(n-1)!}\bigl((1-t)x+ty-s\bigr)^{n-1}d\mu(t)\bigg)ds\\
&=\int_p^{y}\frac{f^{(n)}(s)}{(n-1)!}(y-x)^{n-1}\bigg(\int_{\left(\frac{s-x}{y-x},1\right]}\left(t-\frac{s-x}{y-x}\right)^{n-1}d\mu(t)\bigg)ds\\
&=\int_p^{y}\frac{f^{(n)}(s)}{(n-1)!}(y-x)^{n-1}\cdot\mu_{n-1}^+\left(\frac{s-x}{y-x}\right)ds\geq0.
\end{align*}
Observe that, for $s\in(p,y]$, the ratio $\tau:=\frac{s-x}{y-x}$ belongs to $(\lambda,1]$ and hence, by condition \textit{(i)}, we have that $\mu_{n-1}^+(\tau)\geq0$. This validates the last inequality above

Given the two inequalities obtained and \eqref{split}, it follows that
\[
\int_{[0,1]}\bigg(\int_p^{(1-t)x+ty}\frac{f^{(n)}(s)}{(n-1)!}\bigl((1-t)x+ty-s\bigr)^{n-1}ds\bigg)d\mu(t)\geq0.
\]
This inequality, together with \eqref{sum} used in \eqref{intf}, yields that inequality \eqref{1} is valid.

To finish the proof, we need to consider the case when $f$ is not necessarily of the class $C^\infty.$ First chose $\varepsilon_0>0$ so that $[x,y]\subseteq I_{\varepsilon_0}$. Then, for all $\varepsilon\in(0,\varepsilon_0)$, we have that $[x,y]\subseteq I_\varepsilon$. On the other hand, according to Lemma \ref{fepsilon}, conditions \textit{(a)} and \textit{(b)} imply that $\mu_k(\lambda)\cdot f_\varepsilon$ is $k$-increasing for all $k\in\{0,\dots,n-1\}$, and $f_\varepsilon$  (or $-f_\varepsilon$) is $n$-increasing on $I_\varepsilon$. Since $f_\varepsilon$ is infinitely many times differentiable on $I_\varepsilon$, it is also infinitely many times differentiable on $[x,y]$ if $\varepsilon\in(0,\varepsilon_0)$. From the first part of the proof, it follows that
$$
\int_{[0,1]}f_\varepsilon\bigl((1-t)x+ty\bigr)d\mu(t)\geq 0
$$
holds for all $\varepsilon\in(0,\varepsilon_0)$.
Now, using that $f_\varepsilon$ uniformly converges to $f$ on $[x,y]$ as $\varepsilon\to0$, upon taking the limit, we get that \eqref{1} holds for $x<y$ belonging to the interior of $I$. Using the continuity of $f$ at the endpoints of $I$ if necessary, we can see that the inequality is also valid for all $x<y$ from $I$.
\end{proof}

\section{Necessary conditions for the inequality \eqref{1}}

To derive the necessary conditions for the inequality \eqref{1}, we consider the following families of the test functions. 

For fixed $p\in\R$, $n\in\N$, and $u\in I$, define
\[
  \varphi_{p,n}(u):=(u-p)^n, \qquad
  \varphi^+_{p,n}(u):=(u-p)^n_+,\qquad
  \varphi^-_{p,n}(u):=(u-p)^n_-
\]
and, additionally, let 
\[
  \varphi_{p,0}:=1, \qquad
  \varphi^+_{p,0}:=\chi_{(p,\infty)\cap I}\qquad
  \varphi^-_{p,0}:=\chi_{(-\infty,p)\cap I}.
\]

\begin{lemma}\label{lem:pn}Let $p\in\R$ and $n,k\in\N\cup\{0\}$. The following assertions hold:
\begin{enumerate}[(i)]
\item If $0\leq k<n$ and $p\leq\inf I$, then 
$\varphi_{p,n}\in M_{k^+}(I)\setminus M_{k^-}(I)$.
\item If $0\leq k<n$, $n-k$ is even and $p\in I$, then $\varphi_{p,n}\in M_{k^+}(I)\setminus M_{k^-}(I)$.
\item If $0\leq k<n$, $n-k$ is odd and $p\in I$, then $\varphi_{p,n}\not\in M_{k^+}(I)\cup M_{k^-}(I)$.
\item If $0\leq k<n$ and $\sup I\leq p$, then 
$(-1)^{n-k}\varphi_{p,n}\in M_{k^+}(I)\setminus M_{k^-}(I)$.
\item If $k=n$, then $\varphi_{p,n}\in M_{k^+}(I)\setminus M_{k^-}(I)$. 
\item If $k>n$, then $\varphi_{p,n}\in M_{k^+}(I)\cap M_{k^-}(I)$.
\end{enumerate}
\end{lemma}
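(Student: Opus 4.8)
The plan is to exploit that each test function $\varphi_{p,n}$ is a polynomial, hence of class $C^\infty$ on $I$, so that membership in $M_{k^+}(I)$ and in $M_{k^-}(I)$ can be decided purely by inspecting the sign of the $k$th derivative, via Theorem~\ref{thm:Popoviciu3}. First I would record the derivatives. For $0\leq k\leq n$, repeated differentiation gives
\[
\varphi_{p,n}^{(k)}(u)=\frac{n!}{(n-k)!}\,(u-p)^{n-k},\qquad u\in I,
\]
while for $k>n$ we have $\varphi_{p,n}^{(k)}\equiv0$. Thus every assertion reduces to the sign behavior of $(u-p)^{n-k}$ on $I$, together with the remark that $f\in M_{k^+}(I)$ means $f^{(k)}\geq0$ on $I$, that $f\in M_{k^-}(I)$ means $f^{(k)}\leq0$ on $I$, and hence that $f\notin M_{k^-}(I)$ precisely when $f^{(k)}$ is strictly positive at some point of $I$.

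With this reduction the cases $0\leq k<n$ split according to the sign of $u-p$ and the parity of $n-k$. In case (i) one has $p\leq\inf I$, so $u-p>0$ for every $u\in I$ and therefore $\varphi_{p,n}^{(k)}>0$ throughout $I$; this yields $\varphi_{p,n}\in M_{k^+}(I)\setminus M_{k^-}(I)$. In case (ii), $p\in I$ and $n-k$ even give $(u-p)^{n-k}\geq0$ on $I$ with strict positivity for $u\neq p$, again producing $\varphi_{p,n}\in M_{k^+}(I)\setminus M_{k^-}(I)$. In case (iii), $p\in I$ and $n-k$ odd force $(u-p)^{n-k}$ to be negative for $u<p$ and positive for $u>p$; since $p$ lies in the interior, both signs are attained and $\varphi_{p,n}$ is neither $k$-increasing nor $k$-decreasing, i.e.\ $\varphi_{p,n}\notin M_{k^+}(I)\cup M_{k^-}(I)$. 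In case (iv), $\sup I\leq p$ gives $u-p<0$ on $I$; writing $(u-p)^{n-k}=(-1)^{n-k}(p-u)^{n-k}$ shows that the $k$th derivative of $(-1)^{n-k}\varphi_{p,n}$ equals $\frac{n!}{(n-k)!}(p-u)^{n-k}>0$ on $I$, so $(-1)^{n-k}\varphi_{p,n}\in M_{k^+}(I)\setminus M_{k^-}(I)$.

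Finally, the two remaining cases follow immediately from the derivative formula. If $k=n$, then $\varphi_{p,n}^{(n)}\equiv n!>0$, so $\varphi_{p,n}\in M_{n^+}(I)\setminus M_{n^-}(I)$, which is case (v). If $k>n$, then $\varphi_{p,n}^{(k)}\equiv0$, which is simultaneously nonnegative and nonpositive, giving $\varphi_{p,n}\in M_{k^+}(I)\cap M_{k^-}(I)$ as in case (vi). I expect the only genuine care to be needed in the exclusions from $M_{k^-}(I)$: rather than merely checking $\varphi_{p,n}^{(k)}\geq0$ one must verify strict positivity at some point of the open interval $I$, which is where the hypotheses $p\leq\inf I$, $p\in I$, or $\sup I\leq p$ and the parity of $n-k$ are used decisively, and in case (iii) one must confirm that both signs are actually realized on $I$ precisely because $p$ is an interior point.
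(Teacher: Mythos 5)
Your proposal is correct and follows exactly the paper's approach: both reduce each assertion to the sign of $\varphi_{p,n}^{(k)}$ on $I$ via Theorem~\ref{thm:Popoviciu3}, using the same explicit derivative formula. The paper leaves the case analysis as ``straightforward,'' whereas you spell it out; your case-by-case sign checks (including the strict-positivity points needed for the exclusions) are all accurate.
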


\begin{proof} The function $\varphi_{p,n}$ is infinitely many times differentiable; therefore, for the proof of the assertion, we can use Theorem \ref{thm:Popoviciu3}. For all $u\in I$, we have that
\[
  \varphi_{p,n}^{(\ell)}(u)
  =\begin{cases}
  \frac{n!}{(n-\ell)!}(u-p)^{n-\ell} & \mbox{if } 0\leq \ell\leq n,\\
  0 & \mbox{if } \ell>n.\\
  \end{cases}
\]
By analyzing the positivity and the negativity of the $k$th order derivative over $I$, the assertions follow straightforwardly.
\end{proof}

\begin{lemma}\label{lem:pn+} Let $p\in I$ and $n,k\in\N\cup\{0\}$. The following assertions hold:
\begin{enumerate}[(i)]
\item If $0\leq k\leq n+1$, then $\varphi^+_{p,n}\in M_{k^+}(I)\setminus M_{k^-}(I)$.
\item If $k>n+1$, then $\varphi^+_{p,n}\not\in M_{k^+}(I)\cup M_{k^-}(I)$.
\end{enumerate}
\end{lemma}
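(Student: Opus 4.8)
The plan is to use the fact that $\varphi^+_{p,n}$ is a spline of smoothness exactly $C^{n-1}$. Indeed, it equals the polynomial $(u-p)^n$ for $u>p$ and vanishes for $u\le p$, and its derivatives up to order $n-1$ are
$$
\bigl(\varphi^+_{p,n}\bigr)^{(\ell)}(u)=\frac{n!}{(n-\ell)!}(u-p)^{n-\ell}_+\qquad(0\le\ell\le n-1),
$$
each of which is continuous; in particular the $(n-1)$-st derivative $n!\,(u-p)_+$ has a genuine corner at the interior point $p$, so that $\varphi^+_{p,n}\notin C^n(I)$ (for $n=0$ the function $\chi_{(p,\infty)\cap I}$ already fails to be in $C^0(I)$ at $p$). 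With this description in hand, both assertions reduce to the Popoviciu characterizations recalled in Theorems \ref{thm:Popoviciu1}, \ref{thm:Popoviciu2}, and \ref{thm:Popoviciu3}.

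For (i) I would split according to the order $k$. For $k=0$ the statement is immediate from the definition of $0$-increasingness: $\varphi^+_{p,n}\ge 0$ and is strictly positive on $(p,\infty)\cap I$, which is nonempty since $p\in I$ and $I$ is open. For $1\le k\le n-1$ the function is of class $C^k$, so Theorem \ref{thm:Popoviciu3} applies: the $k$-th derivative $\frac{n!}{(n-k)!}(u-p)^{n-k}_+$ is nonnegative and strictly positive to the right of $p$, which gives $\varphi^+_{p,n}\in M_{k^+}(I)\setminus M_{k^-}(I)$. The two boundary orders are precisely where the $k$-th derivative ceases to exist and one must pass to a lower-order derivative: for $k=n$ I invoke Theorem \ref{thm:Popoviciu2}, noting that $f^{(n-1)}=n!\,(u-p)_+$ is nondecreasing but not nonincreasing, and for $k=n+1$ (hence $n\ge 1$) Theorem \ref{thm:Popoviciu1} applies with $f^{(n-1)}=n!\,(u-p)_+$ convex but not concave (it is not affine, its corner lying at $p\in I$). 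The remaining degenerate case $n=0$, $k=1$ is handled directly, since $\chi_{(p,\infty)\cap I}$ is nondecreasing but not nonincreasing.

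For (ii) the cleanest route avoids any explicit divided-difference computation and instead uses the regularity half of Theorem \ref{thm:Popoviciu1}. Since $k\ge n+2\ge 2$, if $\varphi^+_{p,n}$ were $k$-increasing then it would have to be $(k-2)$-times continuously differentiable; but $k-2\ge n$, so $C^{k-2}(I)\subseteq C^n(I)$, which would force $\varphi^+_{p,n}\in C^n(I)$, contradicting the observation that this spline is not of class $C^n$. Applying the same reasoning to $-\varphi^+_{p,n}$, which is equally not in $C^{k-2}(I)$, excludes $k$-decreasingness as well, so $\varphi^+_{p,n}\notin M_{k^+}(I)\cup M_{k^-}(I)$.

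The point I expect to carry the argument is the automatic regularity encoded in Theorem \ref{thm:Popoviciu1}: $k$-increasingness for $k\ge 2$ already entails membership in $C^{k-2}(I)$, and this alone disposes of (ii) without having to place nodes on both sides of $p$. The role of the hypothesis $p\in I$ is likewise decisive throughout: it places the corner of $\varphi^+_{p,n}$ in the interior of the domain, which is exactly what breaks the reverse monotonicity or concavity at the boundary orders in (i) and what destroys $C^n$-smoothness in (ii).
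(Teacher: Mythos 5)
Your proof is correct and follows essentially the same route as the paper's: write down the derivatives of the truncated power $\varphi^+_{p,n}$, apply the Popoviciu characterizations (Theorems \ref{thm:Popoviciu1}--\ref{thm:Popoviciu3}) at the boundary orders $k=n$ and $k=n+1$, treat $n=0$ separately, and use the automatic $C^{k-2}$-regularity from Theorem \ref{thm:Popoviciu1} to exclude both $k$-increasingness and $k$-decreasingness when $k>n+1$. The only cosmetic difference is that for $1\le k\le n-1$ you invoke Theorem \ref{thm:Popoviciu3} on $f^{(k)}$, whereas the paper uses Theorem \ref{thm:Popoviciu2} on $f^{(k-1)}$ uniformly for $1\le k\le n$.
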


\begin{proof} 
If $n=0$, then $\varphi_{p,0}^+$ is a nonnegative (but not nonpositive) and increasing (but not decreasing) function which is discontinuous at $p$. Therefore, the assertion $(i)$ holds for $k\in\{0,1\}$. Observing that $\varphi_{p,0}^+$ is discontinuous at $p$, by Theorem \ref{thm:Popoviciu1}, we get that it cannot be $k$-monotone if $k>1$, which shows that assertion $(ii)$ is also valid in this case. Thus, from now on, we can assume that $n\geq1$.

The function $\varphi^+_{p,n}$ is infinitely many times differentiable on $\R$ except at $u=p$. At this point, it is differentiable only $n-1$ times. In addition,
for all $u\in I$, we have that
\[
  \big(\varphi_{p,n}^+\big)^{(\ell)}(u)
  =\frac{n!}{(n-\ell)!}(u-p)_+^{n-\ell} \qquad \mbox{if}\quad 0\leq \ell\leq n-1.
\]
First, we verify assertion (i). 

For $k=0$, the assertion in $(i)$ is obvious.
If $1\leq k\leq n$, then $0\leq k-1\leq n-1$, and hence the $(k-1)$st-order derivative of $\varphi_{p,n}^+$ exists, and it is an increasing (but not decreasing) continuous function. Therefore, according to Theorem \ref{thm:Popoviciu2}, the function $\varphi_{p,n}^+$ is $k$-increasing but not $k$-decreasing on $I$. 

If $k=n+1$, then $\varphi_{p,n}^+$ is $k-2=n-1$ times differentiable and its $(n-1)$st-order derivative equals $n!\varphi_{p,1}^+$, which is a convex (but not concave) function. In view of Theorem \ref{thm:Popoviciu2}, it follows that $\varphi_{p,n}^+$ is $(n+1)$-increasing but not $(n+1)$-decreasing on $I$.

Finally, if $k>n+1$, i.e., $k\geq n+2$ and if $\varphi^+_{p,n}\in M_{k^+}(I)\cup M_{k^-}(I)$ was valid, then, by Theorem \ref{thm:Popoviciu1}, we get that $f$ is $k-2\geq n$ times continuously differentiable on $I$, which is a contradiction. This completes the proof of assertion (ii).
\end{proof}

\begin{lemma}\label{lem:pn-} Let $p\in I$ and $n,k\in\N\cup\{0\}$. The following assertions hold:
\begin{enumerate}[(i)]
\item If $0\leq k\leq n+1$, then $(-1)^k\varphi^-_{p,n}\in M_{k^+}(I)\setminus M_{k^-}(I)$.
\item If $k>n+1$, then $\varphi^-_{p,n}\not\in M_{k^+}(I)\cup M_{k^-}(I)$.
\end{enumerate}
\end{lemma}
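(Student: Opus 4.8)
The plan is to reduce Lemma~\ref{lem:pn-} to the already-established Lemma~\ref{lem:pn+} by exploiting the reflection symmetry that interchanges $\varphi^+_{p,n}$ and $\varphi^-_{p,n}$. Concretely, I would set $\sigma(u):=2p-u$ and $J:=2p-I$. Since $p\in I$, the set $J$ is again a nonempty open interval with $p\in J$, and $\sigma$ is an involutive bijection between $I$ and $J$. A direct check of the definitions gives
\[
\varphi^-_{p,n}(u)=(u-p)^n_-=(p-u)^n_+=\varphi^+_{p,n}(2p-u)=\big(\varphi^+_{p,n}\circ\sigma\big)(u),\qquad u\in I,
\]
and the same identity holds in the case $n=0$, since $\sigma$ maps $(-\infty,p)\cap I$ onto $(p,\infty)\cap J$.

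The first key step is to record how divided differences transform under $\sigma$. Given pairwise distinct $x_0,\dots,x_k\in I$, I would write $y_i:=2p-x_i\in J$ and use $x_j-x_i=-(y_j-y_i)$ in the $k$ factors of each denominator to obtain
\[
\varphi^-_{p,n}[x_0,\dots,x_k]=(-1)^k\,\varphi^+_{p,n}[y_0,\dots,y_k],
\]
so that $(-1)^k\varphi^-_{p,n}[x_0,\dots,x_k]=\varphi^+_{p,n}[y_0,\dots,y_k]$. Because $\sigma$ is a bijection, as the tuple $(x_0,\dots,x_k)$ ranges over all distinct tuples in $I$ the image tuple $(y_0,\dots,y_k)$ ranges over all distinct tuples in $J$. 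This yields the translation dictionary $(-1)^k\varphi^-_{p,n}\in M_{k^+}(I)\iff\varphi^+_{p,n}\in M_{k^+}(J)$, and likewise with $M_{k^-}$ on both sides.

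With this dictionary the two assertions follow at once. For assertion (i), when $0\le k\le n+1$, I would apply Lemma~\ref{lem:pn+}(i) on the interval $J$ (legitimate since $p\in J$) to get $\varphi^+_{p,n}\in M_{k^+}(J)\setminus M_{k^-}(J)$, which transports to $(-1)^k\varphi^-_{p,n}\in M_{k^+}(I)\setminus M_{k^-}(I)$. For assertion (ii), when $k>n+1$, Lemma~\ref{lem:pn+}(ii) gives $\varphi^+_{p,n}\notin M_{k^+}(J)\cup M_{k^-}(J)$, whence $(-1)^k\varphi^-_{p,n}\notin M_{k^+}(I)\cup M_{k^-}(I)$; since $M_{k^-}(I)=-M_{k^+}(I)$, the union is invariant under negation, so multiplying by $(-1)^k$ does not affect membership, giving $\varphi^-_{p,n}\notin M_{k^+}(I)\cup M_{k^-}(I)$ as claimed.

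I do not anticipate a genuine obstacle: the argument is a clean reflection reduction, and the only points needing care are the sign bookkeeping in the $(-1)^k$ factors and the boundary case $n=0$, where $\varphi^-_{p,0}$ is a characteristic function and one must verify the reflection identity at the jump point $u=p$. Alternatively, one could mirror the derivative-based proof of Lemma~\ref{lem:pn+} directly, computing $(\varphi^-_{p,n})^{(\ell)}(u)=(-1)^\ell\frac{n!}{(n-\ell)!}(u-p)^{n-\ell}_-$ for $0\le\ell\le n-1$ and invoking Theorems~\ref{thm:Popoviciu1}--\ref{thm:Popoviciu3}; however, the reflection route is shorter and reuses Lemma~\ref{lem:pn+} verbatim.
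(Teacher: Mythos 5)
Your proposal is correct, but it takes a different route from the paper, which simply declares the proof ``entirely analogous'' to that of Lemma~\ref{lem:pn+} and omits it --- i.e., the intended argument is to redo the derivative computation with $(u-p)_-^{n-\ell}$ in place of $(u-p)_+^{n-\ell}$, tracking the extra factor $(-1)^\ell$, and to invoke Theorems~\ref{thm:Popoviciu1}--\ref{thm:Popoviciu3} as before (this is exactly your stated alternative). Your main argument instead deduces Lemma~\ref{lem:pn-} as a formal corollary of Lemma~\ref{lem:pn+} via the reflection $\sigma(u)=2p-u$: the identity $\varphi^-_{p,n}=\varphi^+_{p,n}\circ\sigma$ is correct (including the case $n=0$, where $\sigma$ carries $(-\infty,p)\cap I$ onto $(p,\infty)\cap J$), the transformation rule $\varphi^-_{p,n}[x_0,\dots,x_k]=(-1)^k\varphi^+_{p,n}[y_0,\dots,y_k]$ follows from negating each of the $k$ factors in the denominators, and applying Lemma~\ref{lem:pn+} on $J=2p-I$ is legitimate because the paper's hypotheses on the interval are only that it be nonempty and open, and $p\in J$. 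The final step of (ii) is also handled correctly: since $M_{k^-}(I)=-M_{k^+}(I)$, the union $M_{k^+}(I)\cup M_{k^-}(I)$ is stable under multiplication by $\pm1$, so non-membership of $(-1)^k\varphi^-_{p,n}$ gives non-membership of $\varphi^-_{p,n}$. What the reflection buys is genuine reuse of Lemma~\ref{lem:pn+} with no new case analysis and no need to recheck which derivative is increasing or convex; what the direct mirroring buys is self-containedness and consistency with the paper's style. Either is acceptable; your write-up has no gap.
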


\begin{proof}
The proof is entirely analogous to that of Lemma \ref{lem:pn+}; therefore, it is omitted.
\end{proof}

\begin{lemma} \label{lem:pn0}
Let $p\in\R$ and $n\in\N\cup\{0\}$. Then $f:=\varphi_{p,n}$ satisfies inequality \eqref{1} if and only if 
\[
  \mu_n(\lambda)\geq0 \qquad (\lambda\in J(p)),
\]
where
\[
  J(p)
  :=\begin{cases}
  \bigg(-\infty,\dfrac{p-\inf I}{\sup I-\inf I}\bigg)
   \subseteq(-\infty,0)& \mbox{if } p\leq\inf I,\\[4mm]
  \R & \mbox{if } p\in I,\\[1mm]
  \bigg(1+\dfrac{p-\sup I}{\sup I-\inf I},\infty\bigg)\subseteq(1,\infty)
   & \mbox{if } p\geq\sup I.
  \end{cases}
\]
Analogously, $f:=-\varphi_{p,n}$ satisfies inequality \eqref{1} if and only if 
\[
  \mu_n(\lambda)\leq0 \qquad (\lambda\in J(p)).
\]
\end{lemma}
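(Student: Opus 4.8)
The plan is to reduce the validity of \eqref{1} for the single test function $f=\varphi_{p,n}$ to a pointwise condition on $\mu_n$, and then to identify the exact set of parameters that arises. First I would carry out the elementary substitution. For $x<y$ in $I$ set $\lambda:=\frac{p-x}{y-x}$; then a direct computation gives
\[
(1-t)x+ty-p=(y-x)(t-\lambda),
\]
so that $\varphi_{p,n}\bigl((1-t)x+ty\bigr)=(y-x)^n(t-\lambda)^n$ (with the convention $0^0:=1$ when $n=0$). Integrating against $\mu$ yields
\[
\int_{[0,1]}\varphi_{p,n}\bigl((1-t)x+ty\bigr)\,d\mu(t)=(y-x)^n\,\mu_n(\lambda).
\]
Since $x<y$ forces $(y-x)^n>0$, the inequality \eqref{1} for this particular pair $(x,y)$ is equivalent to $\mu_n(\lambda)\geq0$.

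Consequently, $f=\varphi_{p,n}$ satisfies \eqref{1} for all admissible $x<y$ if and only if $\mu_n(\lambda)\geq0$ holds for every $\lambda$ in the image $R$ of the map
\[
\Lambda\colon\{(x,y)\in I\times I: x<y\}\to\R,\qquad \Lambda(x,y):=\frac{p-x}{y-x}.
\]
Thus the whole statement reduces to proving the set identity $R=J(p)$; the analogous claim for $-\varphi_{p,n}$ then follows at once by multiplying the displayed integral identity by $-1$, which turns the condition into $\mu_n(\lambda)\leq0$ on the very same set $R=J(p)$.

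The main work—and the step I expect to require the most care—is the determination of $R$. The domain of $\Lambda$ is connected and $\Lambda$ is continuous on it, so $R$ is an interval; it therefore suffices to compute its endpoints and to verify that they are not attained, which will hold because $I$ is open, so $\inf I$ and $\sup I$ are never reached. I would split into three cases according to the position of $p$, reading off the monotonicity of $\Lambda$ from the partial derivatives
\[
\partial_x\Lambda=\frac{p-y}{(y-x)^2},\qquad \partial_y\Lambda=\frac{x-p}{(y-x)^2}.
\]
When $p\leq\inf I$, both $p-x$ and $p-y$ are negative on the whole domain, so $\Lambda$ is decreasing in $x$ and increasing in $y$; its supremum is approached as $x\to\inf I$ and $y\to\sup I$, giving the non-attained value $\frac{p-\inf I}{\sup I-\inf I}$, while letting $y\to x^+$ drives $\Lambda\to-\infty$, whence $R=\bigl(-\infty,\frac{p-\inf I}{\sup I-\inf I}\bigr)$. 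The case $p\geq\sup I$ is symmetric and yields $R=\bigl(1+\frac{p-\sup I}{\sup I-\inf I},\infty\bigr)$, using the identity $1+\frac{p-\sup I}{\sup I-\inf I}=\frac{p-\inf I}{\sup I-\inf I}$. Finally, when $p\in I$ the quantities $p-x$ and $p-y$ change sign over the domain: the choice $x=p$ gives $\Lambda=0$, letting $y\to x^+$ with $x>p$ sends $\Lambda\to-\infty$, and letting $y\to x^+$ with $y<p$ sends $\Lambda\to+\infty$, so by connectedness $R=\R$. In all three cases $R=J(p)$, completing the argument.
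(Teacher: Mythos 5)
Your proof is correct and follows the paper's argument exactly: substitute $f=\varphi_{p,n}$, factor out $(y-x)^n>0$ to reduce \eqref{1} to $\mu_n\bigl(\tfrac{p-x}{y-x}\bigr)\geq0$, and identify the range of $(x,y)\mapsto\tfrac{p-x}{y-x}$ with $J(p)$. The only difference is that you carry out in detail the case analysis for that range, which the paper dismisses as ``an elementary computation''; your version of it is sound.
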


\begin{proof} 
The inequality \eqref{1} with $f:=\varphi_{p,n}$ reads as follows: for all $x,y\in I$ with $x<y$,
\begin{align*}  
  0&\leq\int_{[0,1]} \bigl((1-t)x+ty-p\bigr)^n d\mu(t)\\
    &=(y-x)^n\int_{[0,1]} (t-\tfrac{p-x}{y-x})^n d\mu(t)
    =(y-x)^n\mu_n(\tfrac{p-x}{y-x}).
\end{align*}
Therefore, validity of inequality \eqref{1} with $f=\varphi_{p,n}$ is equivalent to the property that
\[
  \mu_n(\lambda)\geq0
\]
for all $\lambda$ belonging to the set
\[
  S(p):=\bigg\{\frac{p-x}{y-x}\colon
  x,y\in I,\,x<y\bigg\}.
\]
An elementary computation shows that $S(p)$ coincides with the open interval $J(p)$ defined in the assertion, completing the proof.
\end{proof}

\begin{lemma} \label{lem:pn++}
Let $p\in I$ and $n\in\N\cup\{0\}$. Then $f:=\varphi_{p,n}^+$ satisfies inequality \eqref{1} if and only if 
\begin{equation}\label{pn++}
  \mu_n^+(\lambda)\geq0 \quad (\lambda\in[0,1])
  \qquad\mbox{and}\qquad
  \mu_n(\lambda)\geq0 \quad (\lambda\in(-\infty,0]). 
\end{equation}
\end{lemma}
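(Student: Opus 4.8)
The plan is to evaluate the left-hand side of \eqref{1} for $f=\varphi_{p,n}^+$ directly, using the same substitution as in the proof of Lemma \ref{lem:pn0}, and then to read off the sign conditions from a case analysis according to the position of the resulting parameter $\lambda$ relative to the interval $[0,1]$. Fix $x,y\in I$ with $x<y$ and set $\lambda:=\frac{p-x}{y-x}$. Since $y-x>0$, for every $t\in[0,1]$ we have
$$
(1-t)x+ty-p=(y-x)(t-\lambda),
$$
so that, for $n\ge1$,
$$
\varphi_{p,n}^+\bigl((1-t)x+ty\bigr)=\bigl((y-x)(t-\lambda)\bigr)_+^n=(y-x)^n(t-\lambda)_+^n,
$$
while for $n=0$ the value $\varphi_{p,0}^+\bigl((1-t)x+ty\bigr)=\chi_{(p,\infty)}\bigl((1-t)x+ty\bigr)$ equals $\chi_{(\lambda,\infty)}(t)$. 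In either case the integral in \eqref{1} equals $(y-x)^n\Phi(\lambda)$, where $\Phi(\lambda):=\int_{[0,1]}(t-\lambda)_+^n\,d\mu(t)$, interpreted via the characteristic function when $n=0$.

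Next I would compute $\Phi(\lambda)$ by splitting $[0,1]$ at $\lambda$. If $\lambda<0$, then $t-\lambda>0$ for all $t\in[0,1]$, so $\Phi(\lambda)=\int_{[0,1]}(t-\lambda)^n\,d\mu(t)=\mu_n(\lambda)$. If $\lambda\in[0,1]$, then $(t-\lambda)_+^n$ vanishes on $[0,\lambda]$ and equals $(t-\lambda)^n$ on $(\lambda,1]$, whence $\Phi(\lambda)=\int_{(\lambda,1]}(t-\lambda)^n\,d\mu(t)=\mu_n^+(\lambda)$. Finally, if $\lambda>1$, then $t-\lambda<0$ for all $t\in[0,1]$ and $\Phi(\lambda)=0$. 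Recalling from Lemma \ref{lem:pn0} that, for $p\in I$, the parameter $\lambda=\frac{p-x}{y-x}$ ranges over all of $\R$ as $x<y$ vary in $I$, the validity of \eqref{1} for $f=\varphi_{p,n}^+$ is equivalent, after dividing by the positive factor $(y-x)^n$, to $\Phi(\lambda)\ge0$ for every $\lambda\in\R$, that is, to
$$
\mu_n(\lambda)\ge0\ (\lambda<0),\qquad \mu_n^+(\lambda)\ge0\ (\lambda\in[0,1]),
$$
the case $\lambda>1$ being automatic.

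It remains to reconcile these with the stated form \eqref{pn++}, which differs only in that the point $\lambda=0$ is attached to the $\mu_n$-condition as well; I expect this bookkeeping at $\lambda=0$ to be the only delicate point. The claim is that $\mu_n(0)\ge0$ is redundant: for $n\ge1$ the atom at $t=0$ contributes $0^n=0$, so $\mu_n(0)=\mu_n^+(0)$, already nonnegative by the second condition; for $n=0$ one has $(t-\lambda)^0\equiv1$ on $[0,1]$ for every $\lambda\le0$, so $\mu_0$ is constant on $(-\infty,0]$ and $\mu_0(0)\ge0$ follows from $\mu_0(\lambda)\ge0$ for $\lambda<0$. Alternatively, since $\lambda\mapsto\mu_n(\lambda)$ is a polynomial for $n\ge1$ and constant for $n=0$, hence continuous, $\mu_n(0)\ge0$ follows by letting $\lambda\to0^-$. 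Thus the pair of conditions $\mu_n^+(\lambda)\ge0$ on $[0,1]$ and $\mu_n(\lambda)\ge0$ on $(-\infty,0]$ is equivalent to the conditions derived above, completing the equivalence.
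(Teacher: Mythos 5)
Your proposal is correct and follows essentially the same route as the paper's proof: both evaluate $\int_{[0,1]}\varphi_{p,n}^+\bigl((1-t)x+ty\bigr)d\mu(t)$ directly via the substitution $\lambda=\frac{p-x}{y-x}$ and split into the three cases $\lambda>1$ (trivial), $\lambda\in[0,1]$ (giving $\mu_n^+$), and $\lambda<0$ (giving $\mu_n$, extended to $\lambda=0$ by continuity of the polynomial $\mu_n$). The only difference is cosmetic — you index the cases by $\lambda$ while the paper indexes them by the position of $p$ relative to $x$ and $y$ — and your extra remark that $\mu_n(0)=\mu_n^+(0)$ for $n\geq1$ is a correct, if redundant, alternative to the continuity argument.
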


\begin{proof}
The inequality \eqref{1} with $f:=\varphi_{p,n}^+$ reads as follows: for all $x,y\in I$ with $x<y$,
\[
  0\leq\int_{[0,1]} \varphi_{p,n}^+\bigl((1-t)x+ty\bigr) d\mu(t).
\] 
Observe that, according to its definition, $\varphi_{p,n}^+(u)=0$ if $u\leq p$ and $\varphi_{p,n}^+(u)>0$ if $p<u$. 

If $x<y\leq p$, then $(1-t)x+ty\leq p$, therefore $\varphi_{p,n}^+\bigl((1-t)x+ty\bigr)=0$ for all $t\in[0,1]$. Thus, the above inequality is trivially valid.

If $x\leq p<y$, then $(1-t)x+ty>p$, i.e., $\varphi_{p,n}^+\bigl((1-t)x+ty\bigr)>0$ holds if and only if $t\in (\frac{p-x}{y-x},1]$. Therefore,
\begin{align*}  
  0\leq \int_{[0,1]} \varphi_{p,n}^+\bigl((1-t)x+ty\bigr) d\mu(t)
  &=\int_{(\frac{p-x}{y-x},1]} \bigl((1-t)x+ty-p\bigr)^n d\mu(t)\\
  &=(y-x)^n\int_{(\frac{p-x}{y-x},1]} (t-\tfrac{p-x}{y-x})^n d\mu(t)\\
  &=(y-x)^n\mu_n^+(\tfrac{p-x}{y-x}).
\end{align*}
Now, we can conclude that \eqref{1} holds with $f:=\varphi_{p,n}^+$ for all $x,y\in I$ satisfying $x\leq p<y$ if and only if the first inequality in \eqref{pn++} is valid.

If $p<x<y$, then $(1-t)x+ty>p$, i.e., $\varphi_{p,n}^+\bigl((1-t)x+ty\bigr)>0$ holds for all $t\in [0,1]$. Therefore,
\begin{align*}  
  0\leq \int_{[0,1]} \varphi_{p,n}^+\bigl((1-t)x+ty\bigr) d\mu(t)
  &=\int_{[0,1]} \bigl((1-t)x+ty-p\bigr)^n d\mu(t)\\
  &=(y-x)^n\int_{[0,1]} (t-\tfrac{p-x}{y-x})^n d\mu(t)
  =(y-x)^n\mu_n(\tfrac{p-x}{y-x}).
\end{align*}
Thus, the inequality \eqref{1} holds for $f:=\varphi_{p,n}^+$ and $p<x<y$ if and only if $\mu_n(\lambda)\geq0$ for all $\lambda$ belonging to the set
\[
  S^+(p):=\bigg\{\frac{p-x}{y-x}\colon
  x,y\in I,\,p<x<y\bigg\}=(-\infty,0).
\]
Since $\mu_n(\lambda)$ is a polynomial of $\lambda$, therefore, $\mu_n$ is continuous and, consequently, $\mu_n$ is nonnegative on $(-\infty,0)$ if and only it is nonnegative on $(-\infty,0]$.
This proves that the second inequality in \eqref{pn++} is also a necessary and sufficient condition for the inequality \eqref{1} to be valid with $f:=\varphi_{p,n}^+$ and for all $x,y\in I$ satisfying $p<x<y$.
\end{proof}

\begin{lemma} \label{lem:pn--}
Let $p\in I$ and $n\in\N\cup\{0\}$. Then $f:=\varphi_{p,n}^-$ satisfies inequality \eqref{1} if and only if 
\[
  (-1)^n\mu_n^-(\lambda)\geq0 \quad (\lambda\in[0,1])
  \qquad\mbox{and}\qquad
  (-1)^n\mu_n(\lambda)\geq0 \quad (\lambda\in[1,\infty)).
\]
\end{lemma}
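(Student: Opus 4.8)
The plan is to follow the same case analysis as in the proof of Lemma~\ref{lem:pn++}, but now exploiting that $\varphi_{p,n}^-$ is supported on $(-\infty,p)$ rather than on $(p,\infty)$. The starting observation is that $\varphi_{p,n}^-(u)=(u-p)_-^n$ vanishes for $u\geq p$ and equals $(p-u)^n>0$ for $u<p$ (for $n=0$ one uses instead the indicator definition $\varphi_{p,0}^-=\chi_{(-\infty,p)\cap I}$). Writing $u=(1-t)x+ty$ and $\lambda:=\frac{p-x}{y-x}$, the algebraic identity $p-u=(y-x)(\lambda-t)$ yields $(p-u)^n=(-1)^n(y-x)^n(t-\lambda)^n$; this is where the factor $(-1)^n$ in the statement originates, and tracking it correctly is the main bookkeeping point of the argument.

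I would then split into three cases according to the position of $p$ relative to $[x,y]$. First, if $p\leq x<y$, then $u\geq x\geq p$ throughout, so $\varphi_{p,n}^-\bigl((1-t)x+ty\bigr)=0$ for every $t\in[0,1]$ and \eqref{1} holds trivially. Second, if $x<p\leq y$, then $\varphi_{p,n}^-\bigl((1-t)x+ty\bigr)>0$ exactly for $t\in[0,\lambda)$, and by the identity above the integral in \eqref{1} equals $(-1)^n(y-x)^n\mu_n^-(\lambda)$; hence validity of \eqref{1} over this range of pairs $(x,y)$ is equivalent to $(-1)^n\mu_n^-(\lambda)\geq0$ for every such $\lambda$. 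Third, if $x<y<p$, then $u\leq y<p$ for all $t$, so the integrand is positive on all of $[0,1]$ and the integral equals $(-1)^n(y-x)^n\mu_n(\lambda)$, giving the condition $(-1)^n\mu_n(\lambda)\geq0$.

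It remains to identify the exact ranges of $\lambda$ produced in the two nontrivial cases and to pass to the closed intervals claimed. Parametrizing by $p=(1-\lambda)x+\lambda y$, one checks (just as in Lemma~\ref{lem:pn0}) that $\bigl\{\frac{p-x}{y-x}: x<p\leq y\text{ in }I\bigr\}=(0,1]$, while $\bigl\{\frac{p-x}{y-x}: x<y<p\text{ in }I\bigr\}=(1,\infty)$; here $p\in I$ together with the openness of $I$ is used to realize every such $\lambda$. The first condition thus holds on $(0,1]$, and it extends for free to $\lambda=0$ since $\mu_n^-(0)=0$, giving $(-1)^n\mu_n^-(\lambda)\geq0$ on $[0,1]$. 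The second holds on $(1,\infty)$, and because $\mu_n$ is a polynomial in $\lambda$, hence continuous, it extends to the closed half-line, giving $(-1)^n\mu_n(\lambda)\geq0$ on $[1,\infty)$.

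The argument contains no serious difficulty; the only points demanding care are the sign bookkeeping of the factor $(-1)^n$ and the verification that the two $\lambda$-ranges are precisely $(0,1]$ and $(1,\infty)$, so that, together with the two boundary extensions, they match the closed intervals $[0,1]$ and $[1,\infty)$ in the statement. The whole computation is the mirror image of the proof of Lemma~\ref{lem:pn++}, with $p-u$ in place of $u-p$ accounting for the sign $(-1)^n$.
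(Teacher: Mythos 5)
Your proof is correct and is precisely the argument the paper intends: it declares the proof of this lemma ``completely analogous'' to that of Lemma~\ref{lem:pn++} and omits it, and your three-case analysis (with the integrand supported on $[0,\lambda)$ so that $\mu_n^-$ appears, the factor $(-1)^n$ coming from $p-u=(y-x)(\lambda-t)$, and the boundary extensions via $\mu_n^-(0)=0$ and the continuity of the polynomial $\mu_n$) is exactly that mirror-image argument.
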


The proof of this lemma is completely analogous to that of Lemma \ref{lem:pn++}; therefore, it is omitted.

Now, we give a necessary condition for the $\bigl(m^+,(m+1)^+,\dots,n^+\bigr)$- mo\-no\-tone ordering. 

\begin{theorem}
\label{necth}
Let $0\leq m<n$ be positive integers, and $\mu$ be a bounded signed measure on $[0,1].$ Assume that the interval $I\subset\R$ is bounded from below. Then the inequality 
\eqref{1} holds for all $$f\in M_{m^+,(m+1)^+,\dots,n^+}(I)$$ if and only if the following conditions are satisfied:
\begin{enumerate}[(i)]
    \item $\mu_0(0)=\cdots=\mu_{m-1}(0)=0$,
    \item $\mu_k(0)\geq0$ holds for all $k\in\{m,\dots,n-1\}$,
\item $\mu_{n-1}^+(\tau)\geq 0$ for all $\tau\in(0,1].$
\end{enumerate}
\end{theorem}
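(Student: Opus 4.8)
The plan is to prove the two implications separately, with sufficiency being an immediate specialization of Theorem~\ref{sufth} (taking $\lambda=0$) and necessity relying on the test-function Lemmas~\ref{lem:pn}, \ref{lem:pn+}, \ref{lem:pn0} and \ref{lem:pn++}.

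For \emph{sufficiency}, I would assume (i)--(iii) and take any $f\in M_{m^+,(m+1)^+,\dots,n^+}(I)$, then check that the hypotheses of Theorem~\ref{sufth} hold with $\lambda:=0$. For condition~\textit{(a)} of that theorem I split the range $k\in\{0,\dots,n-1\}$: when $k<m$, condition~(i) gives $\mu_k(0)=0$, so $\mu_k(0)\cdot f\equiv0$ is trivially $k$-increasing; when $m\le k\le n-1$, condition~(ii) gives $\mu_k(0)\ge0$ while $f\in M_{k^+}(I)$, so $\mu_k(0)\cdot f$ is again $k$-increasing. For condition~\textit{(b)} I use alternative~\textit{(i)}: $f$ is $n$-increasing by assumption, the requirement on $\mu_{n-1}^-$ over $[0,\lambda)=[0,0)=\emptyset$ is vacuous, and the requirement $\mu_{n-1}^+(\tau)\ge0$ on $(\lambda,1]=(0,1]$ is exactly condition~(iii). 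Theorem~\ref{sufth} then yields \eqref{1}.

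For \emph{necessity}, I assume \eqref{1} holds for every $f$ in the class. Here the hypothesis that $I$ is bounded from below is essential, since it lets me use the polynomial test functions $\varphi_{p,k}$ with $p\le\inf I$, which by Lemma~\ref{lem:pn}\,(i),(v),(vi) are $j$-increasing for \emph{every} $j\ge0$ and hence lie in the class. Feeding $\varphi_{p,k}$ (with $p=\inf I$, so that $J(p)=(-\infty,0)$) into Lemma~\ref{lem:pn0} gives $\mu_k(\lambda)\ge0$ for all $\lambda<0$, and since $\mu_k$ is a polynomial in $\lambda$, letting $\lambda\to0^-$ produces $\mu_k(0)\ge0$; applying this for $k\in\{m,\dots,n-1\}$ delivers~(ii). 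To get the equalities in~(i), I note that for $k<m$ every order $j\in\{m,\dots,n\}$ satisfies $j>k$, so Lemma~\ref{lem:pn}\,(vi) shows $-\varphi_{p,k}$ also lies in the class; feeding both $\varphi_{p,k}$ and $-\varphi_{p,k}$ into Lemma~\ref{lem:pn0} forces $\mu_k\equiv0$ on $J(p)$ and thus $\mu_k(0)=0$. Finally, for~(iii) I take the truncated power $\varphi_{p,n-1}^+$ with $p\in I$, which by Lemma~\ref{lem:pn+} is $k$-increasing for $0\le k\le n$ and hence belongs to the class; Lemma~\ref{lem:pn++} then converts \eqref{1} into $\mu_{n-1}^+(\lambda)\ge0$ for $\lambda\in[0,1]$, which contains~(iii).

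The routine computations are all already packaged inside Lemmas~\ref{lem:pn0}, \ref{lem:pn++}, \ref{lem:pn+} and \ref{lem:pn}, so the real work is the bookkeeping of membership in $M_{m^+,(m+1)^+,\dots,n^+}(I)$: one must verify, order by order, that each chosen test function is simultaneously $j$-increasing for all $j\in\{m,\dots,n\}$ (and, for the functions used to prove~(i), also $j$-decreasing). This is exactly where the three cases $j<k$, $j=k$, $j>k$ of Lemma~\ref{lem:pn} and the boundedness of $I$ from below come together, and where a careless choice of test function would break the argument. A secondary subtlety is the passage from the inequalities on the open set $J(p)$ to the boundary value at $\lambda=0$, which is justified by the continuity (indeed, polynomial dependence) of $\lambda\mapsto\mu_k(\lambda)$.
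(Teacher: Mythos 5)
Your proposal is correct and follows essentially the same route as the paper: sufficiency via Theorem~\ref{sufth} with $\lambda=0$ (noting that the condition on $\mu_{n-1}^-$ is vacuous there), and necessity via the test functions $\pm\varphi_{p,k}$ for (i), $\varphi_{\inf I,k}$ together with Lemma~\ref{lem:pn0} and continuity for (ii), and $\varphi_{p,n-1}^+$ with Lemma~\ref{lem:pn++} for (iii). The only cosmetic difference is that the paper takes $p\in I$ (so $J(p)=\R$) when proving (i), whereas you take $p=\inf I$ and pass to the boundary by polynomiality; both are valid.
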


\begin{proof}
To prove the sufficiency part of this theorem, assume that $f\in M_{m^+,(m+1)^+,\dots,n^+}(I)$ and the conditions $(i)$--$(iii)$ are satisfied. Then, in view of conditions $(i)$ and $(ii)$, we trivially have that $\mu_k(0)f$ is $k$-increasing for $k\in\{0,\dots,n-1\}$. Using also condition $(iii)$, we can see that assumptions $(a)$ and $(b)(i)$ of Theorem \ref{sufth} with $\lambda=0$ (even without using that $I$ is bounded from below) are satisfied. Thus, the conclusion of this theorem is valid; hence, $f$ fulfills the inequality \eqref{1}.

In what follows, we verify the necessity of conditions $(i)$--$(iii)$. Assume that the inequality \eqref{1} holds for all $f\in M_{m^+,(m+1)^+,\dots,n^+}(I)$.

To see that condition $(i)$ is necessary, let $k\in\{0,\dots,m-1\}$ and let $p\in I$ be fixed. Then according to assertion $(vi)$ of Lemma \ref{lem:pn}, for all $\ell\in\{m,\dots,n-1\}$, we have that $\varphi_{p,k}\in M_{\ell^+}(I)\cap M_{\ell^-}(I)$, and hence $\pm\varphi_{p,k}\in M_{m^+,(m+1)^+,\dots,n^+}(I)$ holds. Thus, by our assumption, these two functions satisfy \eqref{1}. Applying Lemma \ref{lem:pn0}, it follows that $\mu_k(\lambda)$ and $-\mu_k(\lambda)$ are nonnegative for all $\lambda\in\R$. In particular, for $\lambda=0$, which shows that condition $(i)$ has to be valid.

For the proof of the necessity of condition $(ii)$, take $p:=\inf I>-\infty$ and let $k\in\{m,\dots,n-1\}$.
Then, by assertions $(i)$, $(v)$ and $(vi)$ of Lemma \ref{lem:pn}, the function $f:=\varphi_{p,k}\in M_{\ell^+}(I)$ for all $\ell\in\N\cup\{0\}$ and hence $f\in M_{m^+,(m+1)^+,\dots,n^+}(I)$ holds. Thus, by our assumption, this function satisfies \eqref{1}. Using Lemma \ref{lem:pn0}, it follows that $\mu_k$ is nonnegative on $J(p)=(-\infty,0)$. By the continuity of $\mu_k$, it follows that $\mu_k(0)\geq 0$, which validates condition $(ii)$.

Finally, we proceed to prove the necessity of $(iii)$. Fix $p\in I$. Then, by Lemma \ref{lem:pn+}, the function $f:=\varphi_{p,n-1}^+$ belongs to $M_{0^+,\dots,n^+}(I).$ Thus, by our assumption, it should satisfy \eqref{1}.
Using now Lemma \ref{lem:pn++}, 
with this function $f$, we can obtain that $\mu_{n-1}^+$
is nonnegative on $[0,1]$, that is, condition $(iii)$ must hold.
\end{proof}

\begin{remark}
Observe that we used the assumption $\inf I>-\infty$ only in the proof of the necessity of condition $(ii).$ Thus, the other two conditions are necessary without this assumption. It is an open question whether condition $(ii)$ of Theorem \ref{necth} is necessary if the interval $I$ is not bounded below.  
\end{remark}

A counterpart of Theorem \ref{necth} is contained in the following result.

\begin{theorem}
\label{necthub}
Let $0\leq m<n$ be positive integers, and $\mu$ be a bounded signed measure on $[0,1].$ Assume that the interval $I\subset\R$ is bounded from above. Then the inequality \eqref{1} holds for all 
$$f\in \bigcap_{k=m}^n (-1)^{n-k}M_{k^+}(I)$$ 
if and only if the following conditions are satisfied:
\begin{enumerate}[(i)]
\item $\mu_0(1)=\cdots=\mu_{m-1}(1)=0$,
\item $(-1)^{n-k}\mu_k(1)\geq0$ holds for all $k\in\{m,\dots,n-1\}$,
\item $\mu_{n-1}^-(\tau)\leq 0$ for all $\tau\in[0,1).$
\end{enumerate}
\end{theorem}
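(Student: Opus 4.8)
The plan is to treat Theorem~\ref{necthub} as the reflected analogue of Theorem~\ref{necth}, obtained by interchanging the roles of the endpoints $0$ and $1$ (equivalently, of $x$ and $y$). The alternating sign pattern $(-1)^{n-k}$ appearing in $\bigcap_{k=m}^n (-1)^{n-k}M_{k^+}(I)$ reflects the fact that, when one expands around the right endpoint $\lambda=1$ instead of $\lambda=0$, the monotonicity orders below $n$ alternate. Accordingly, I would split the argument into a sufficiency part (invoking Theorem~\ref{sufth} with $\lambda=1$) and a necessity part (testing \eqref{1} on the families from Lemmas~\ref{lem:pn}--\ref{lem:pn--}, with $p=\sup I$ or $p\in I$). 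As in the remark after Theorem~\ref{necth}, I expect the hypothesis that $I$ is bounded above to be used only for the necessity of condition $(ii)$.

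For sufficiency, suppose $f\in\bigcap_{k=m}^n(-1)^{n-k}M_{k^+}(I)$ and $(i)$--$(iii)$ hold; I would verify the hypotheses of Theorem~\ref{sufth} for $\lambda=1$. Condition $(a)$ there asks that $\mu_k(1)\cdot f$ be $k$-increasing for each $k\in\{0,\dots,n-1\}$: for $k<m$ this is immediate from $(i)$, since then $\mu_k(1)=0$; for $m\le k\le n-1$ it follows from $(ii)$, because $(-1)^{n-k}f\in M_{k^+}(I)$ and $(-1)^{n-k}\mu_k(1)\ge0$, so that $\mu_k(1)\cdot f=\big((-1)^{n-k}\mu_k(1)\big)\cdot\big((-1)^{n-k}f\big)$ is a nonnegative multiple of a $k$-increasing function. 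Condition $(b)$ in its alternative $(i)$, with $\lambda=1$, asks that $f$ be $n$-increasing, which holds because $(-1)^{n-n}f=f\in M_{n^+}(I)$, that $\mu_{n-1}^-(\tau)\le0$ on $[0,\lambda)=[0,1)$, which is exactly $(iii)$, and that $\mu_{n-1}^+(\tau)\ge0$ on $(\lambda,1]=\emptyset$, which is vacuous. Theorem~\ref{sufth} then yields \eqref{1}; as in the sufficiency part of Theorem~\ref{necth}, this direction does not use the boundedness of $I$.

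For necessity, I would extract each condition by inserting a suitable test function into \eqref{1}. For $(i)$, fix $p\in I$ and $k\in\{0,\dots,m-1\}$; since $k<m\le\ell$ for every $\ell\in\{m,\dots,n\}$, assertion $(vi)$ of Lemma~\ref{lem:pn} gives $\pm\varphi_{p,k}\in M_{\ell^+}(I)\cap M_{\ell^-}(I)$, so both functions lie in the class and satisfy \eqref{1}; Lemma~\ref{lem:pn0} with $J(p)=\R$ then forces $\mu_k\equiv0$, in particular $\mu_k(1)=0$. For $(ii)$, take $p:=\sup I<\infty$ and $k\in\{m,\dots,n-1\}$ and consider $(-1)^{n-k}\varphi_{p,k}$; using assertions $(iv)$, $(v)$, $(vi)$ of Lemma~\ref{lem:pn} together with the parity identity $(-1)^{\ell+k}=(-1)^{k-\ell}$, one checks that $(-1)^{n-\ell}\big((-1)^{n-k}\varphi_{p,k}\big)=(-1)^{\ell+k}\varphi_{p,k}\in M_{\ell^+}(I)$ for all $\ell\in\{m,\dots,n\}$, so this function belongs to the class. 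Applying Lemma~\ref{lem:pn0} with $J(\sup I)=(1,\infty)$ and using the continuity of $\mu_k$ gives $(-1)^{n-k}\mu_k(1)\ge0$ in both parity cases. Finally, for $(iii)$, fix $p\in I$ and consider $(-1)^n\varphi_{p,n-1}^-$; assertion $(i)$ of Lemma~\ref{lem:pn-} (applicable because each $\ell\in\{m,\dots,n\}$ satisfies $\ell\le(n-1)+1$) places this function in the class, and a computation of the type carried out for Lemma~\ref{lem:pn--} shows that, for $x<p\le y$,
\[
\int_{[0,1]}(-1)^n\varphi_{p,n-1}^-\bigl((1-t)x+ty\bigr)\,d\mu(t)
=-(y-x)^{n-1}\mu_{n-1}^-\Bigl(\tfrac{p-x}{y-x}\Bigr).
\]
As $x,y$ range over $I$ with $x<p\le y$, the ratio $\tfrac{p-x}{y-x}$ sweeps out all of $(0,1)$, so \eqref{1} forces $\mu_{n-1}^-(\tau)\le0$ there; since $\mu_{n-1}^-(0)=0$, this yields $(iii)$.

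The main obstacle I anticipate is the bookkeeping of signs and parities rather than any analytic difficulty. The delicate points are, first, confirming that the chosen test functions genuinely belong to the alternating intersection $\bigcap_{\ell=m}^n(-1)^{n-\ell}M_{\ell^+}(I)$ for \emph{every} index $\ell$ in the range simultaneously (this is where the identity $(-1)^{\ell+k}=(-1)^{k-\ell}$ and the case split $\ell<k$, $\ell=k$, $\ell>k$ in Lemma~\ref{lem:pn} must be handled together), and second, tracking the extra factor $(-1)^n$ attached to $\varphi_{p,n-1}^-$ so that the sign of $\mu_{n-1}^-$ comes out correctly in $(iii)$; this factor is precisely what converts the ``$\ge$'' in \eqref{1} into the ``$\le$'' in condition $(iii)$, mirroring how the nonnegativity in \eqref{1} produced the nonnegativity conditions in Theorem~\ref{necth}.
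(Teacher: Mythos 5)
Your proposal is correct and follows essentially the same route as the paper: sufficiency via Theorem~\ref{sufth} with $\lambda=1$, and necessity by testing with $\pm\varphi_{p,k}$ ($p\in I$), $(-1)^{n-k}\varphi_{p,k}$ ($p=\sup I$), and $(-1)^n\varphi_{p,n-1}^-$ together with Lemmas~\ref{lem:pn}--\ref{lem:pn--}. The only cosmetic difference is that for condition $(iii)$ you recompute the integral explicitly rather than citing Lemma~\ref{lem:pn--} directly, and your sign and parity bookkeeping checks out.
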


\begin{proof}
To prove the sufficiency part of this theorem, assume that $f\in \bigcap_{k=m}^n (-1)^{n-k}M_{k^+}(I)$ and the conditions $(i)$--$(iii)$ are satisfied. Then, by conditions $(i)$ and $(ii)$, we can see that $\mu_k(1)\cdot f$ is $k$-increasing for $k\in\{0,\dots,n-1\}$. Using also condition $(iii)$, it follows that assumptions $(a)$ and $(b)(i)$ of Theorem \ref{sufth} are satisfied with $\lambda=1$. Thus, the conclusion of this theorem holds, and hence $f$ fulfills \eqref{1}.

Next, we verify the necessity of conditions $(i)$--$(iii)$. Assume that the inequality \eqref{1} holds for all $f\in\bigcap_{k=m}^n (-1)^{n-k}M_{k^+}(I)$.

To prove the necessity of condition $(i)$, let $k\in\{0,\dots,m-1\}$ and let $p\in I$ be fixed. Then according to assertion $(vi)$ of Lemma \ref{lem:pn}, for all $\ell\in\{m,\dots,n-1\}$, we have that $\varphi_{p,k}\in M_{\ell^+}(I)\cap M_{\ell^-}(I)$, and hence $\pm\varphi_{p,k}\in \bigcap_{\ell=m}^n (-1)^{n-\ell}M_{\ell^+}(I)$ holds. Thus, by our assumption, these two functions satisfy \eqref{1}. Applying Lemma \ref{lem:pn0}, it follows that $\mu_k(\lambda)$ and $-\mu_k(\lambda)$ are nonnegative for all $\lambda\in\R$. In particular, for $\lambda=1$, which shows that condition $(i)$ must hold.

For the proof of the necessity of condition $(ii)$, take $p:=\sup I<\infty$ and let $k\in\{m,\dots,n-1\}$.
Then, by assertions $(iv)$, $(v)$ and $(vi)$ of Lemma \ref{lem:pn}, we have $f:=(-1)^{n-k}\varphi_{p,k}\in (-1)^{n-\ell}M_{\ell^+}(I)$ for all $\ell\in\N\cup\{0\}$ and hence $f\in \bigcap_{\ell=m}^n (-1)^{n-\ell}M_{\ell^+}(I)$ holds. Thus, by our assumption, this function satisfies \eqref{1}. Using Lemma \ref{lem:pn0}, it follows that $(-1)^{n-k}\mu_k$ is nonnegative on $J(p)=(1,\infty)$. By the continuity of $\mu_k$, it follows that $(-1)^{n-k}\mu_k(1)\geq 0$, which shows that condition $(ii)$ is also necessary.

In the last step, we verify the necessity of $(iii)$. Fix $p\in I$. Then, by Lemma \ref{lem:pn-},  $f:=(-1)^{n}\varphi_{p,n-1}^-$ belongs to $\bigcap_{\ell=0}^n (-1)^{n-\ell}M_{\ell^+}(I).$ Thus, by our assumption, it fulfills \eqref{1}. Using now Lemma \ref{lem:pn--}, 
with this function $f$, we can obtain that $(-1)^{n}(-1)^{n-1}\mu_{n-1}^-=-\mu_{n-1}^-$ is nonnegative on $[0,1]$, that is, condition $(iii)$ must hold.
\end{proof}

\section{Examples}

\begin{example}
For a function $f\colon I\to\R$, consider the functional inequality 
\begin{equation}\label{1-1}
f\left(\frac{x+y}{2}\right)\leq f(y),
\end{equation}
which is supposed to be valid for all $x,y\in I$ with $x<y$.
Let the measure $\mu$ be given by
$$\mu=-\delta_{\frac12}+\delta_1.$$
Then, \eqref{1-1} is equivalent to \eqref{1}. We have that $\mu_0(0)=0$ and $\mu_1(0)=\frac{1}{2}>0$. Therefore, according to Theorem \ref{Thm:BP}, the continuous solutions of \eqref{1-1} have to be increasing. For $\tau\in(0,1]$, we have that
\begin{align*}
\mu_0^+(\tau)
=\int_{(\tau,1]} (t-\tau)^0d\mu(t)
=\mu((\tau,1])
=\begin{cases}
0 & \mbox{if } \tau\in(0,\tfrac{1}{2}),\\
1 & \mbox{if } \tau\in[\tfrac{1}{2},1],
\end{cases}
\end{align*}
which shows that $\mu_0^+$ is nonnegative on $(0,1]$.
Thus, applying Theorem \ref{sufth} with $\lambda=0$ and $n=1$, it follows that every $1$-increasing function is a solution to \eqref{1-1}. 
\end{example}

The next example is a continuation of Example 1 from the introduction.
\begin{example}
\label{ex1-34-31}
Let the measure $\mu$ be given by
$$\mu=\delta_0-3\delta_{\frac14}+4\delta_\frac12-3\delta_\frac34+\delta_1.$$
We will show that the inequality \eqref{1-3} is satisfied by all $f\in M_{2^+,3^+}(I).$
Indeed, we have:
$\mu_0(0)=\mu_1(0)=0$ and $\mu_2(0)=\frac{1}{8},$
i.e., in view of Theorem \ref{Thm:BP} the set of all solutions of \eqref{1-3} is contained in $M_{2^+}(I).$

Now, we use Theorem \ref{sufth} with $\lambda=0.$
The function $\mu_1^+$ changes its sign in the interval $[0,1]$ (see Figure 1 (A)). 
\begin{figure}
     \centering
          \begin{subfigure}[b]{0.3\textwidth}
         \centering        \includegraphics[width=\textwidth, height=0.6\textwidth]{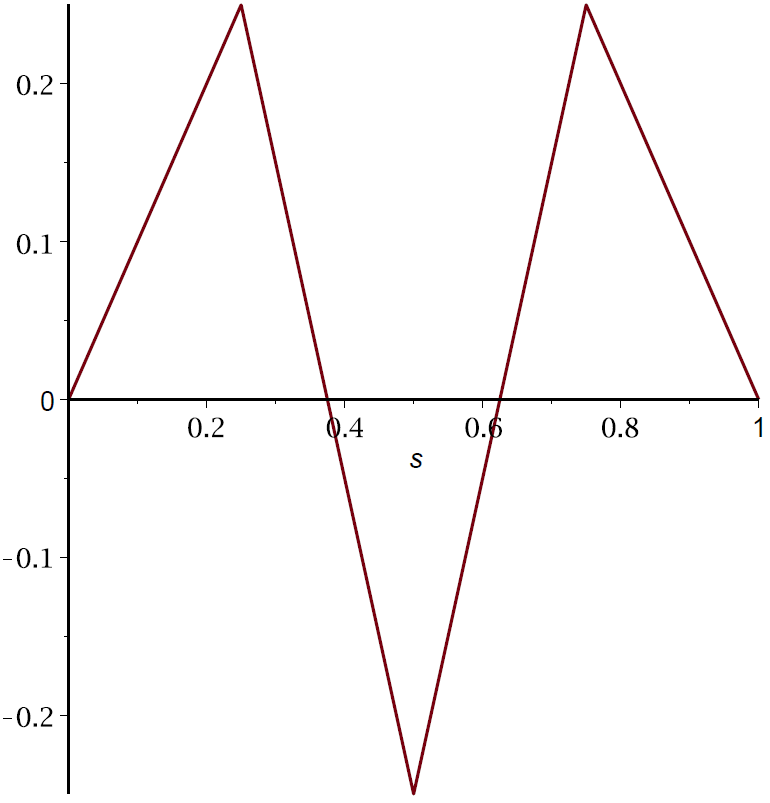}
         \caption{$\mu_1^+$}
             \end{subfigure}
     \hfill
       \begin{subfigure}[b]{0.3\textwidth}
         \centering
\includegraphics[width=\textwidth, height=0.6\textwidth]{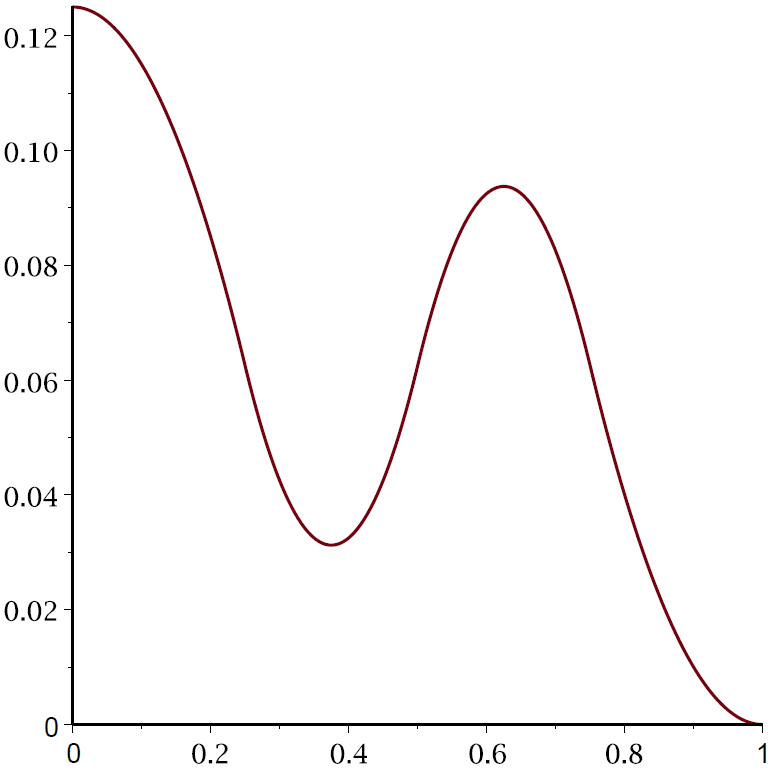}
         \caption{$\mu_2^+$}
             \end{subfigure}
     \hfill
     \begin{subfigure}[b]{0.3\textwidth}
         \centering
\includegraphics[width=\textwidth,height=0.6\textwidth]{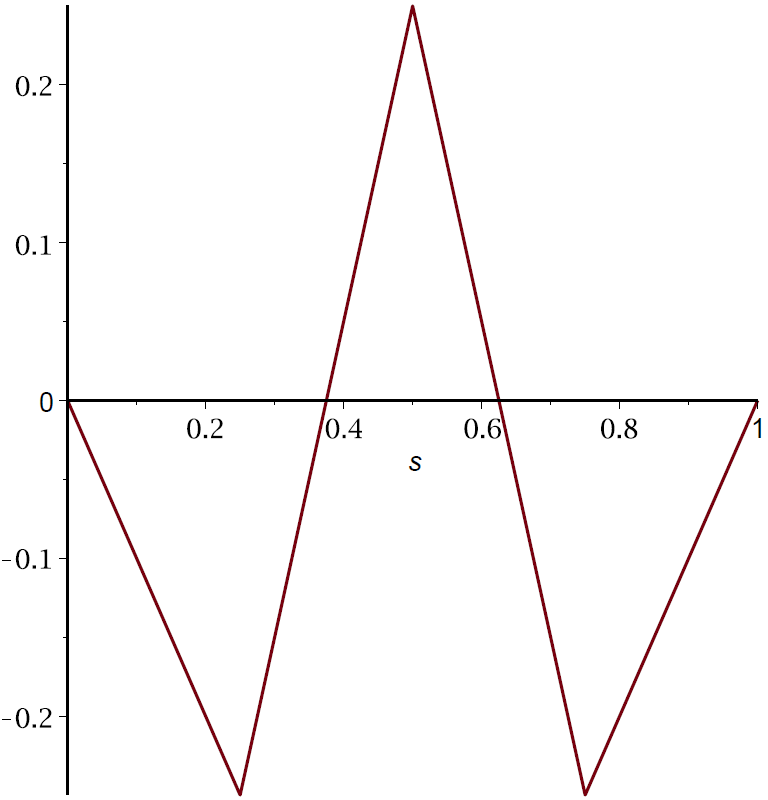}
         \caption{$\mu_1^-$}
             \end{subfigure}
        \caption{The graphs of the functions $\mu_1^+,\mu_2^+,$ $\mu_1^-$ related to  ineq. \eqref{1-3}.}
        \end{figure}
This means that not all $2$-increasing functions satisfy \eqref{1-3}. 
On the other hand, the function $\mu_2^+$ is nonnegative on $[0,1]$ (see Figure 1 (B)). Therefore, every function belonging to $M_{2^+,3^+}(I)$ satisfies the inequality \eqref{1-3}.

Surprisingly this inequality is also satisfied for $f\in M_{2^+,3^-}(I).$
Indeed, using Theorem \ref{sufth} with $\lambda=1$, it is enough to observe that $\mu_0(1)=\mu_1(1)=0,\mu_2(1)=\frac18$ and $\mu_2^-$ is nonnegative on $[0,1]$ (see Figure 2 (A)).

We also have that $\mu_0(1/2)=\mu_1(1/2)=\mu_3(1/2)=0$ and $\mu_2(1/2)=1/8$ and that $\mu_3^-$ is nonpositive and $\mu_3^+$ is nonnegative on $[0,1]$ (see Figure 2 (B) and (C)).
\begin{figure}
     \centering
          \begin{subfigure}[b]{0.3\textwidth}
         \centering        \includegraphics[width=\textwidth, height=0.6\textwidth]{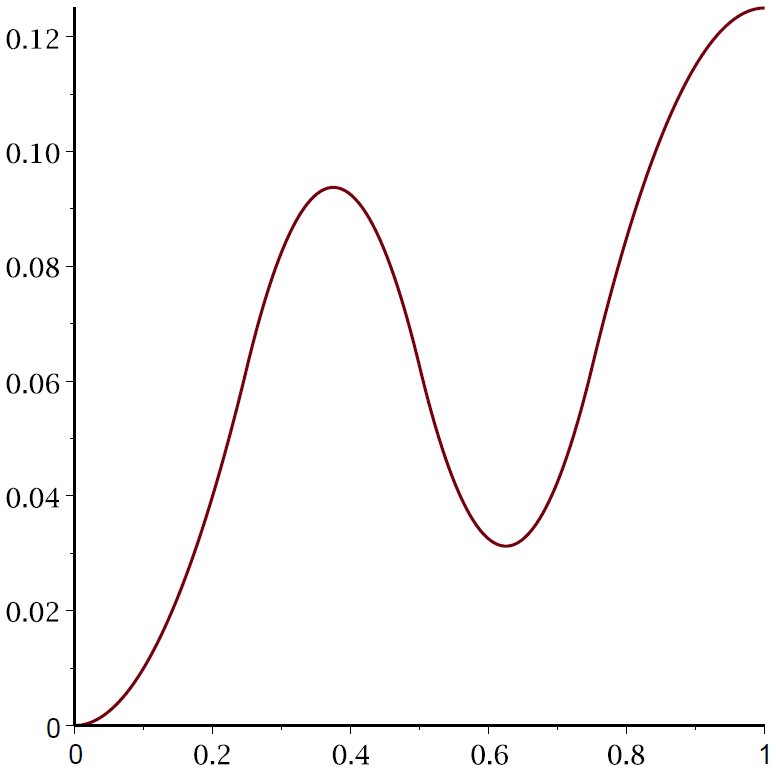}
         \caption{$\mu_2^-$}
             \end{subfigure}
     \hfill
       \begin{subfigure}[b]{0.3\textwidth}
         \centering
\includegraphics[width=\textwidth, height=0.6\textwidth]{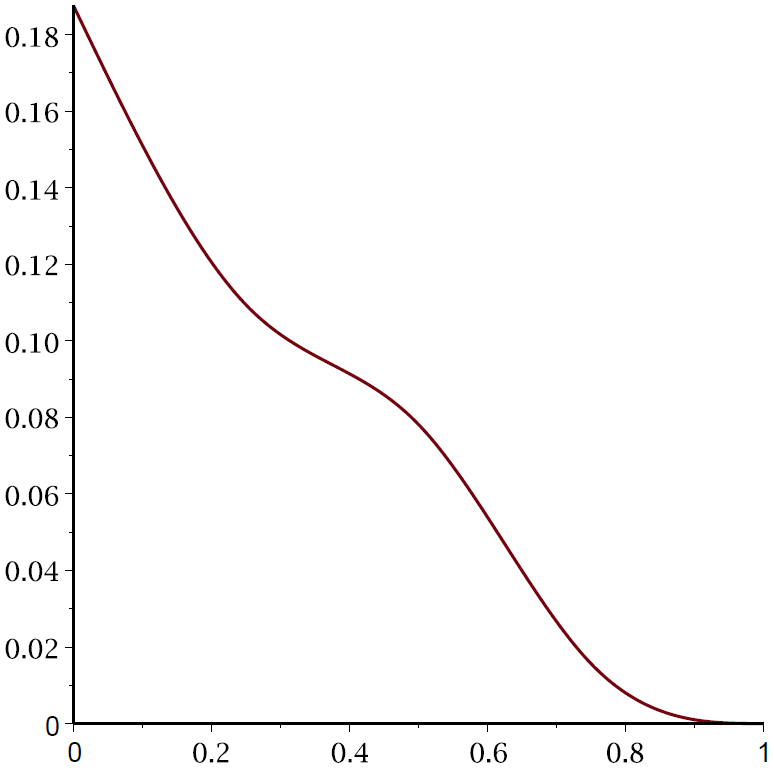}
         \caption{$\mu_3^+$}
             \end{subfigure}
     \hfill
     \begin{subfigure}[b]{0.3\textwidth}
         \centering
\includegraphics[width=\textwidth,height=0.6\textwidth]{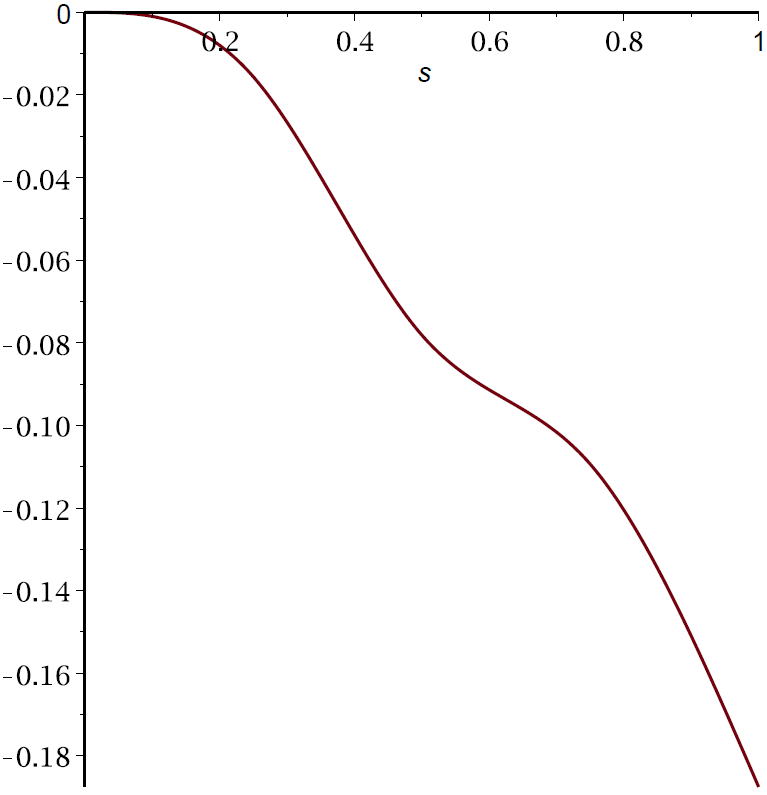}
         \caption{$\mu_3^-$}
             \end{subfigure}
        \caption{The graphs of the functions $\mu_2^-,\mu_3^-,$ $\mu_3^+$ related to  ineq. \eqref{1-3}.}
        \end{figure}

Thus, applying Theorem \ref{sufth} with $\lambda=1/2$, it follows that every function belonging to $M_{2^+,4^+}(I)$ also satisfies the inequality \eqref{1-3}.

Due to the linearity of the inequality \eqref{1-3}, we finally can obtain that every function $f$ belonging to the convex cone $$M_{2^+,3^+}(I)+M_{2^+,3^-}(I)+M_{2^+,4^+}(I)$$ is a solution to \eqref{1-3}.
\end{example}

In the next example, we present an inequality where three orders of increasingness must be used to ensure that a given function satisfies this inequality. 
\begin{example}
To deal with the inequality 
    \begin{equation}
\label{33222}
 3f\left(\frac{x+9y}{10}\right)+3f\left(\frac{9x+y}{10}\right)\leq 2f(x)+2f\left(\frac{x+y}{2}\right)+2f(y) 
 \end{equation}
we consider the measure
$$\mu=2\delta_0-3\delta_{\frac{1}{10}}+2\delta_\frac12-3\delta_{\frac{9}{10}}+2\delta_1.$$
We have $\mu_0(0)=\mu_1(0)=0$ and $\mu_2(0)=\frac{1}{25}.$
As in the previous example, every solution of \eqref{33222} must be $2$-increasing. However, this time, both functions $\mu_1^+$ and $\mu_2^+$ change their signs in $[0,1]$ (see Figure 3 (A) and (B)).
\begin{figure}
     \centering
          \begin{subfigure}[b]{0.3\textwidth}
         \centering        \includegraphics[width=\textwidth, height=0.6\textwidth]{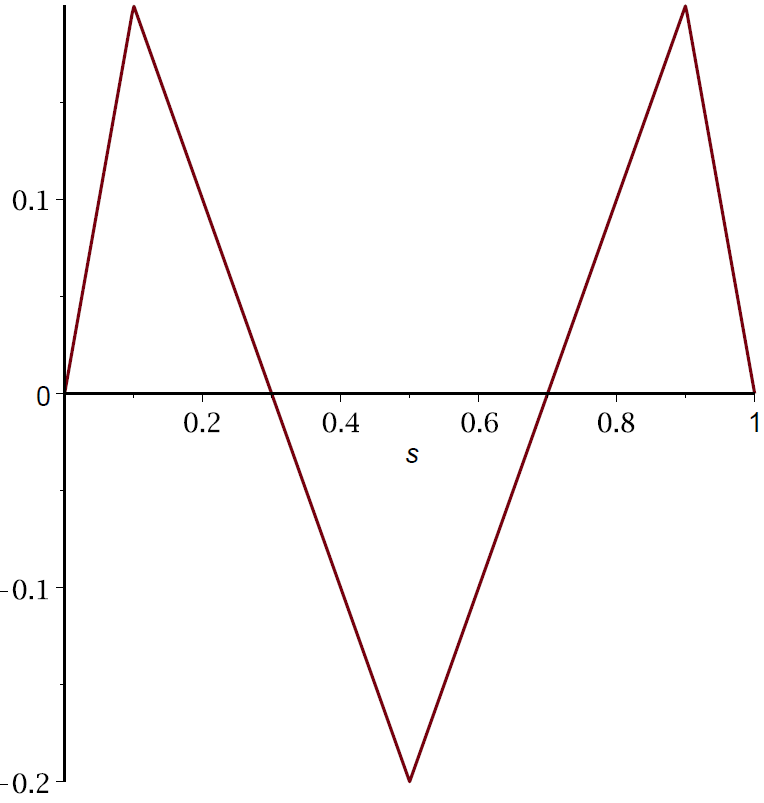}
         \caption{$\mu_1^+$}
         \label{fig:three sin x}
     \end{subfigure}
     \hfill
       \begin{subfigure}[b]{0.3\textwidth}
         \centering
\includegraphics[width=\textwidth, height=0.6\textwidth]{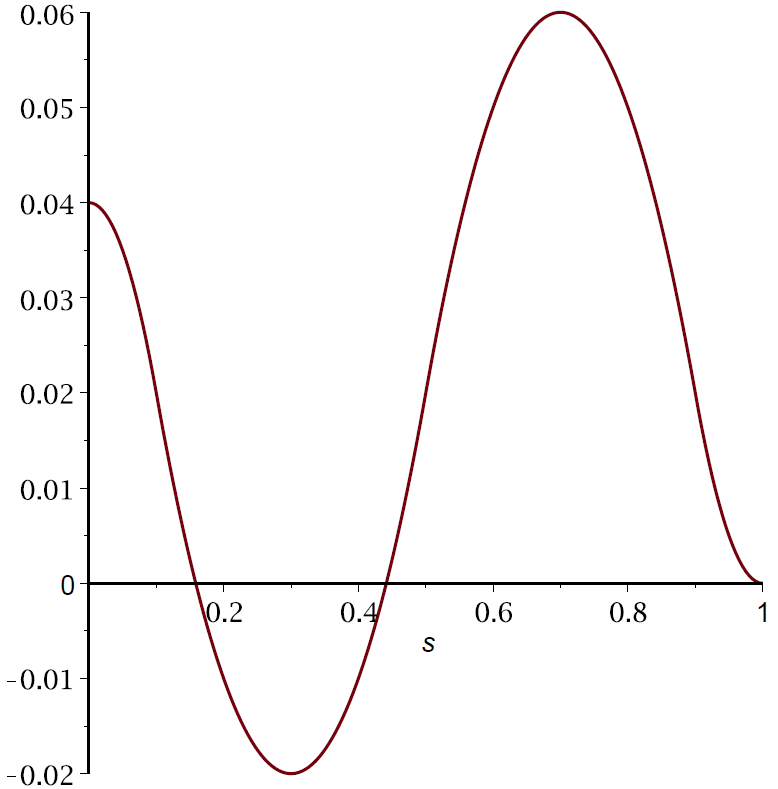}
         \caption{$\mu_2^+$}
             \end{subfigure}
     \hfill
     \begin{subfigure}[b]{0.3\textwidth}
         \centering
\includegraphics[width=\textwidth,height=0.6\textwidth]{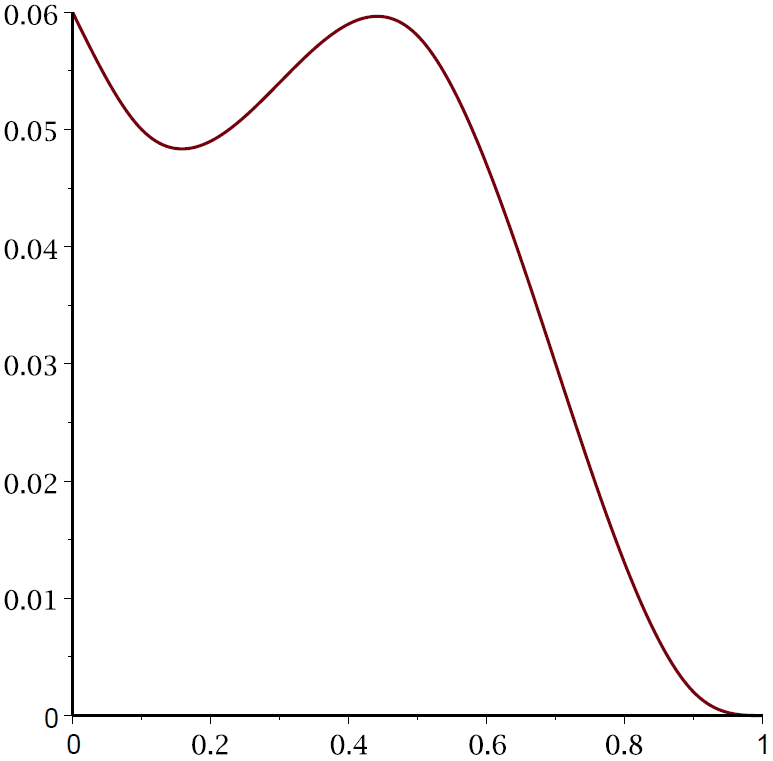}
         \caption{$\mu_3^+$}
             \end{subfigure}
        \caption{The graphs of the functions $\mu_1^+,\mu_2^+$ and $\mu_3^+$ related to the inequality \eqref{33222}.}
        \end{figure}
Therefore, we examine  $\mu_3^+$, and it turns out that it is positive (see Figure 3 (C)). This together with the equality $\mu_3^+(0)=\frac{3}{50}>0$ means that \eqref{33222} is satisfied by all functions $f$ from $M_{2^+,3^+,4^+}(I).$
\end{example}

\begin{remark}
In view of Lemma \ref{Lem:mk}, it is clear that if $\mu_k^+$ is nonnegative for some $k_0$ and $\lambda\in[0,1]$ on the interval $[\lambda,1]$, then it will be nonnegative for all $k\geq k_0$ on the interval $[\lambda,1]$. This will, however, bring no improvement since all classes of functions obtained in this way will be contained in the previously received. Thus, the procedure is to find the smallest $k$ for which $\mu_k^+$ does not change its sign on the interval $[\lambda,1]$.  

\end{remark}
In the previous two examples, the inequalities were symmetric, resulting in relatively symmetric classes of solutions.
Now, we present an example of a nonsymmetric inequality that will be satisfied by all functions from $M_{1^-,2^+}(I)$ and from $M_{1^-,3^+}(I)$. 

\begin{example}
Consider the inequality
\begin{equation}
\label{981}
9f\left(\frac{2x+y}{3}\right)\leq 8f(x)+f(y)    
\end{equation}
and the associated measure
$$\mu=8\delta_0-9\delta_\frac13+\delta_1.$$
We have $\mu_0(0)=0,$ $\mu_1(0)=-2$ which implies that all solutions of \eqref{981} are decreasing. The function $\mu_0^+$ changes its sign, meaning that not all decreasing functions satisfy \eqref{981}. 
\begin{figure}
     \centering
          \begin{subfigure}[b]{0.3\textwidth}
         \centering
         \includegraphics[width=\textwidth]{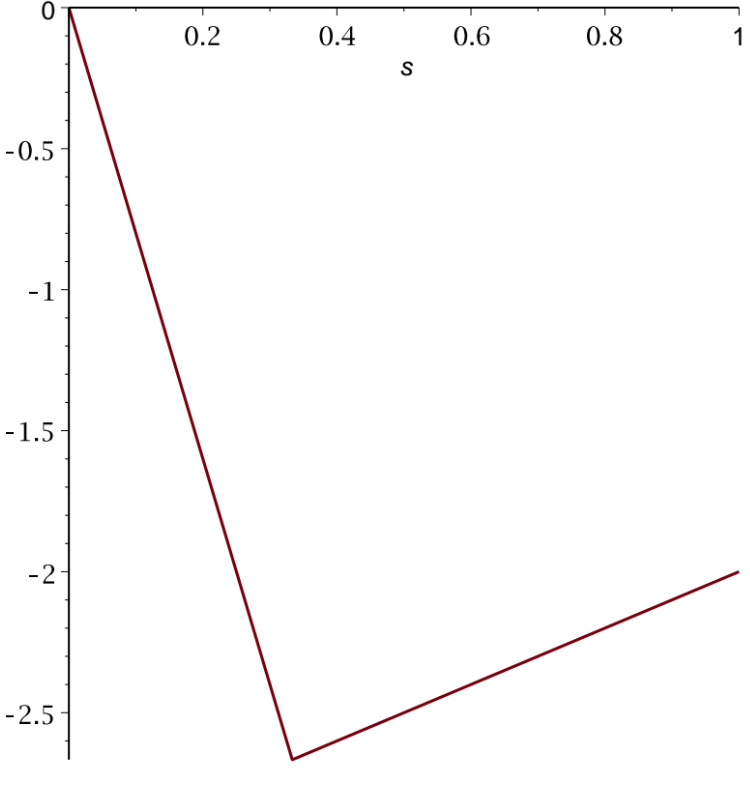}
         \caption{$\mu_1^-$}
              \end{subfigure}
     \hfill
          \begin{subfigure}[b]{0.3\textwidth}
         \centering
         \includegraphics[width=\textwidth]{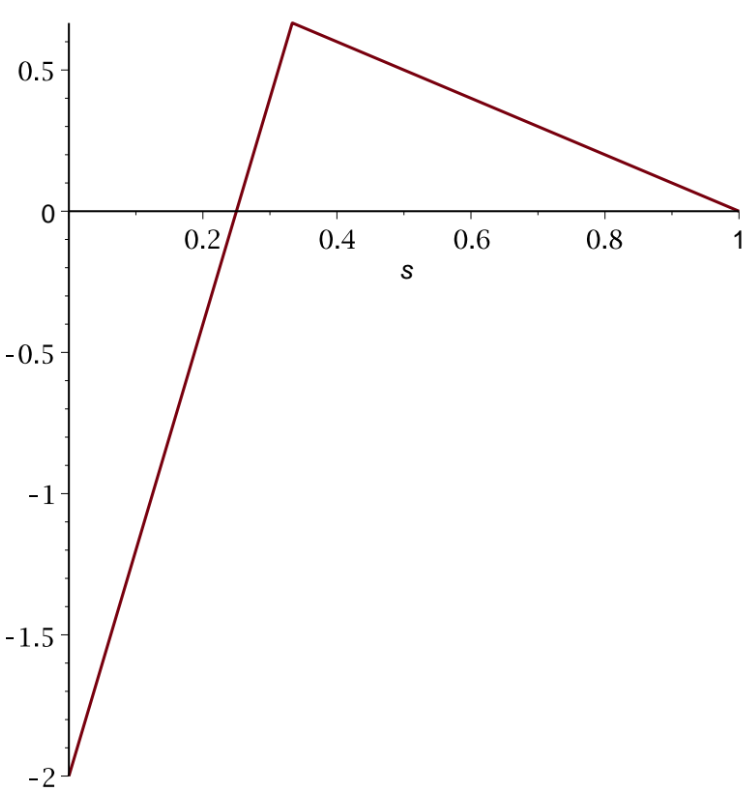}
         \caption{$\mu_1^+$}
              \end{subfigure}
     \hfill
     \begin{subfigure}[b]{0.3\textwidth}
         \centering
         \includegraphics[width=\textwidth]{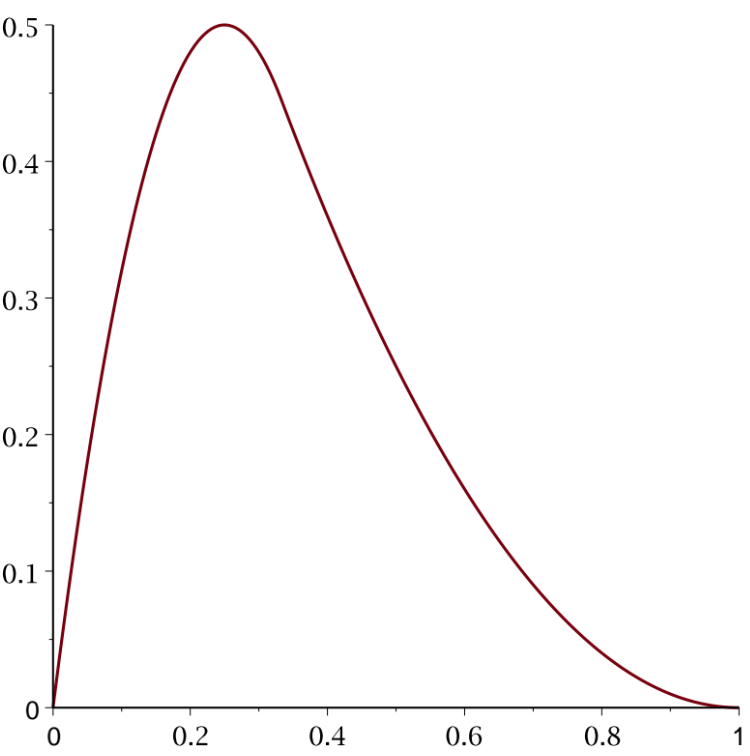}
         \caption{$\mu_2^+$}
              \end{subfigure}
        \caption{The graphs of $\mu_1^+,\mu_2^+,\mu_1^-$ related to ineq.\ \eqref{981}.}
        \end{figure}
Further, the function $\mu_1^+$ changes sign in $[0,1]$ but $\mu_1^-$ is nonpositive.  (See Figure 4(A) and (B).) Moreover, we have $\mu_0(1)=0,\mu_1(1)=-2$, consequently, applying Theorem~\ref{sufth} with $\lambda=1$, we can see that every function from $M_{1^-,2^+}(I)$ satisfies \eqref{981}.
On the other hand, the function $\mu_2^+$ is nonnegative and $\mu_2(0)=0$.
Therefore, applying Theorem~\ref{sufth} with $\lambda=0$, we obtain that every function from $M_{1^-,3^+}(I)$ also satisfies \eqref{981}. In conclusion, by the linearity of the functional inequality, it follows that every function from the convex cone $$M_{1^-,2^+}(I)+M_{1^-,3^+}(I)$$ fulfills \eqref{981}.
\end{example}

Observe that until now, the only class of solutions obtained with the use of $\lambda$ different from numbers $0$ or $1$ was $M_{2^+,4^+}$ from Example \ref{ex1-34-31}, where $\lambda=\frac12$ was used. However, even in that example, the functions $\mu_3^+,\mu_3^-$ were positive, resp. negative on the whole interval $[0,1].$ Observe that the assumption of Theorem \ref{sufth} is weaker. In the next example, we will use that theorem in its full strength. 
\begin{example}
Consider inequality
\begin{equation}
\label{927}
9f\left(\frac{2x+y}{3}\right)\leq 2f(x)+7f(y)    
\end{equation} 
\begin{figure}
     \centering
          \begin{subfigure}[b]{0.3\textwidth}
         \centering
         \includegraphics[width=\textwidth]{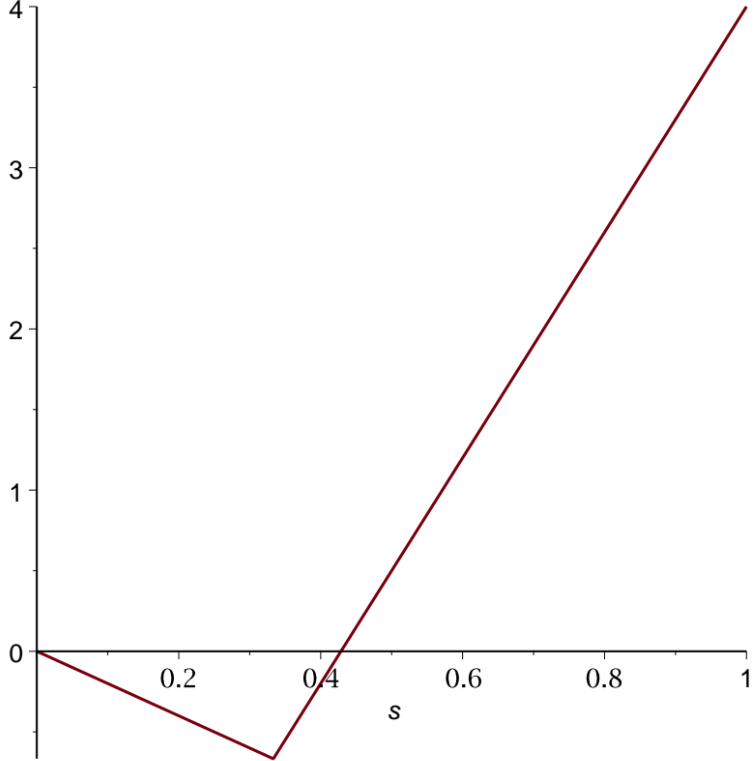}
         \caption{$\mu_1^+$}
              \end{subfigure}
     \hfill
          \begin{subfigure}[b]{0.3\textwidth}
         \centering
         \includegraphics[width=\textwidth]{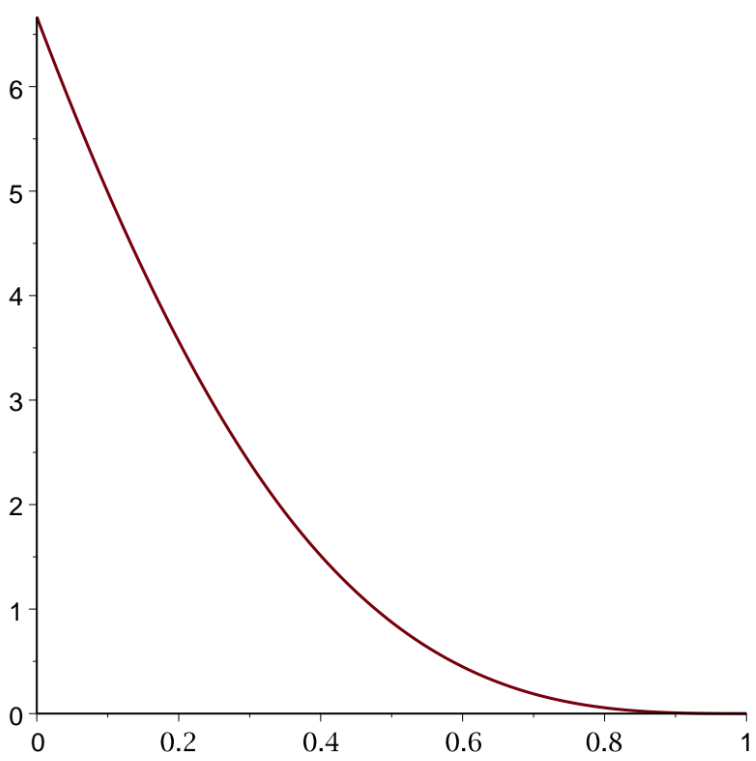}
         \caption{$\mu_2^+$}
              \end{subfigure}
     \hfill
     \begin{subfigure}[b]{0.3\textwidth}
         \centering
         \includegraphics[width=\textwidth]{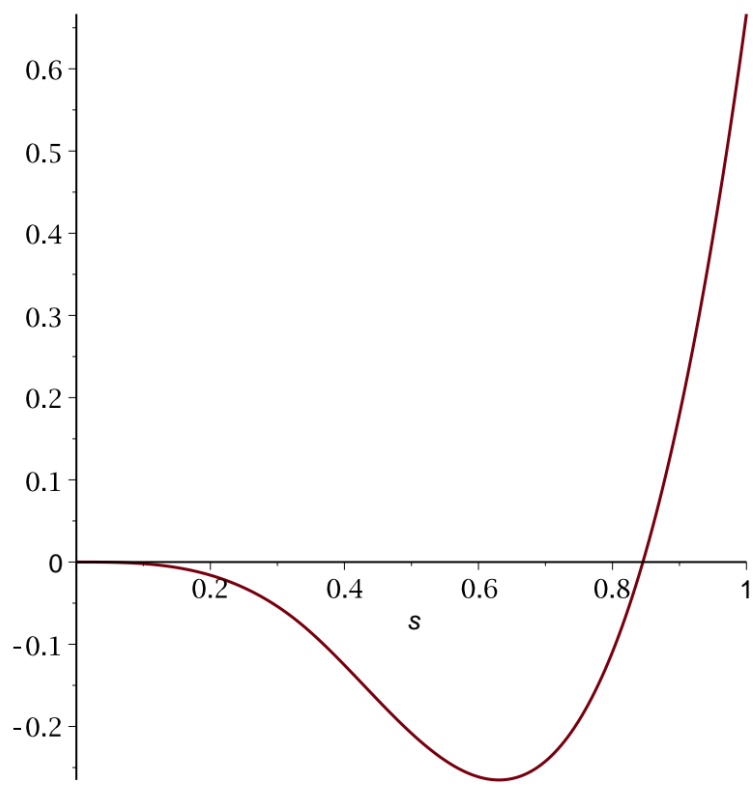}
         \caption{$\mu_2^-$}
              \end{subfigure}
        \caption{The graphs of the functions $\mu_1^+,\mu_2^+$ and $\mu_2^-$ connected with inequality \eqref{927}.}
        \end{figure}
and the measure 
$$\mu=2\delta_0-9\delta_\frac13+7\delta_1$$
Here we have $\mu_0(0)=0$ and $\mu_1(0)=4.$ This means that every solution of \eqref{927} is increasing. The positivity of $\mu_1^-$ implies that every function from $M_{1^+,2^-}$ satisfies \eqref{927}.

The function $\mu_2^-$, on the other hand, changes its sign on $[0,1]$, and therefore, we cannot use here Theorem \ref{sufth} with $\lambda=1.$ We will use here $\lambda=\frac34$.  We have $\mu_0(\frac34)=0,\mu_1(\frac34)=4,\mu_2(\frac34)=0.$ Further, we have 
\[
\mu_{2}^-(\tau)\leq0\mbox{ for }\tau\in[0,3/4)
\quad\mbox{and}\quad
\mu_{2}^+(\tau)\geq0\mbox{ for }\tau\in(3/4,1],
\]
see Figure 5 (B) and (C). Therefore, using Theorem \ref{sufth}, we get that \eqref{927} is also satisfied by all $f\in M_{1^+,3^+}.$ In conclusion, by the linearity of the functional inequality, it follows that every function from the convex cone $$M_{1^+,2^+}(I)+M_{1^+,3^+}(I)$$ fulfills \eqref{927}.
\end{example}

\begin{remark}
As we can see, if we are given a functional inequality of the form 
\begin{equation}
\label{linineq}
a_1f\left(\alpha_1x+(1-\alpha_1)y\right)+\dots+
a_nf\left(\alpha_nx+(1-\alpha_n)y\right)\geq 0
\end{equation}
then, using Theorem \ref{sufth}, we can find a class of functions that satisfy this inequality. Interestingly, if the classical condition needed for an $n$-increasing ordering is satisfied, we get only one class of solutions. Still, if this is not the case, then different classes of functions may satisfy that inequality. 
\end{remark}

Given the above remark, we may say that the results obtained in this paper yield a step toward a complete solution of the inequality \eqref{linineq}. However, the applications of our results are not limited to dealing with inequalities of that kind. 
For example, it is well known that the Bullen inequality 
$$\frac{1}{y-x}\int_x^yf(t)dt\leq\frac{1}{4}f(x)+\frac12f\left(\frac{x+y}{2}\right)+\frac14f(y)$$
holds for all $2$-increasing (i.e., convex) functions $f$ and that 
$$\frac{1}{y-x}\int_x^yf(t)dt\leq\frac{1}{6}f(x)+\frac23f\left(\frac{x+y}{2}\right)+\frac16f(y)$$
is satisfied by all $4$-increasing functions (cf. \cite{BPDebr}).
The right-hand side of the above inequalities is of the form 
$$af(x)+(1-2a)f\left(\frac{x+y}{2}\right)+af(y),$$
with $a=\frac14,\frac16.$ It is natural to ask what happens for other values of $a.$ We will consider such a situation in the following example.
\begin{example}
The inequality 
\begin{equation}
\label{15}
\frac{1}{y-x}\int_x^yf(t)dt\leq\frac{1}{5}f(x)+\frac35f\left(\frac{x+y}{2}\right)+\frac15f(y)
\end{equation}
is satisfied by all 
\begin{equation}
\label{16}
f\in M_{2^+,3^+}(I)+M_{2^+,3^-}(I)+M_{2^+,4^+}(I)
\end{equation}
for all $x,y\in I$ with $x<y$.
Indeed, consider the measure
$$\mu=\frac15\delta_0+\frac35\delta_\frac12+\frac15\delta_1-\ell,$$
where $\ell$ is the Lebesgue measure restricted to the measurable subsets of $[0,1].$

Then we have $\mu_0(0)=\mu_1(0)=0$ and $\mu_2(0)=\frac{1}{60}.$ 
\begin{figure}
     \centering
          \begin{subfigure}[b]{0.3\textwidth}
         \centering
         \includegraphics[width=\textwidth]{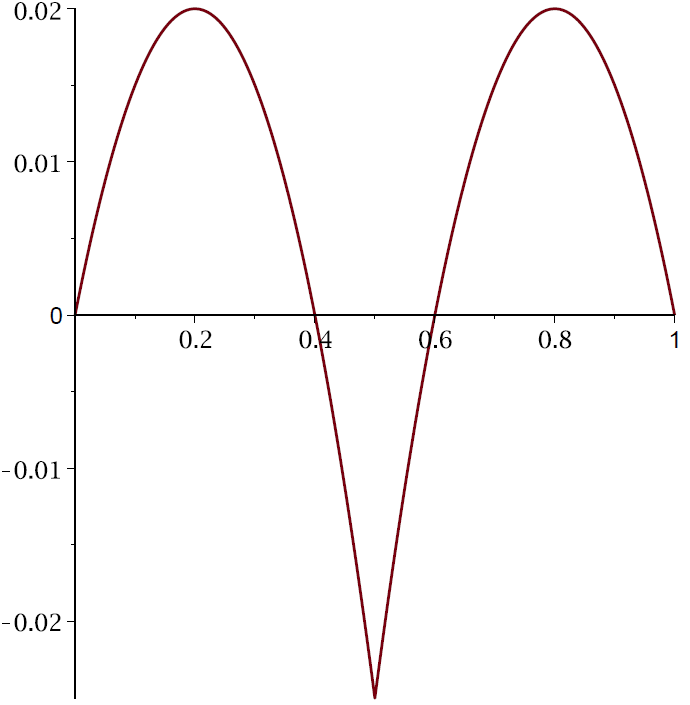}
         \caption{$\mu_1^+$}
              \end{subfigure}
     \hfill
          \begin{subfigure}[b]{0.3\textwidth}
         \centering
         \includegraphics[width=\textwidth]{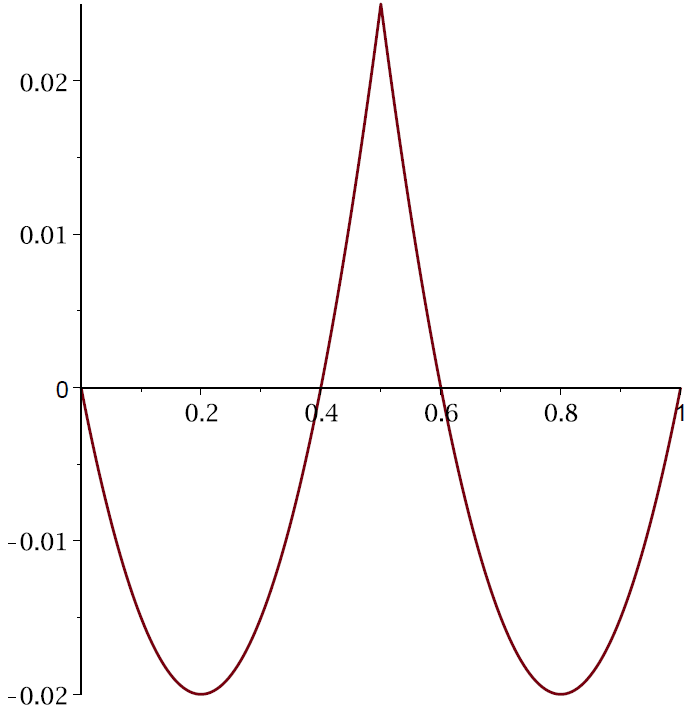}
         \caption{$\mu_1^-$}
              \end{subfigure}
     \hfill
     \begin{subfigure}[b]{0.3\textwidth}
         \centering
         \includegraphics[width=\textwidth]{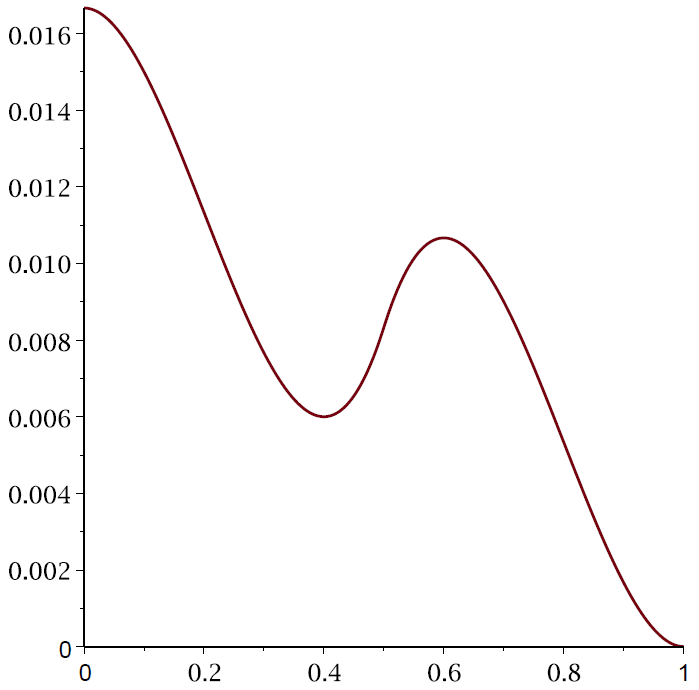}
         \caption{$\mu_2^+$}
              \end{subfigure}
        \caption{The graphs of the functions $\mu_1^+,\mu_1^-$ and $\mu_2^+$ connected with inequality \eqref{15}.}
        \end{figure}

Further, the function $\mu_1^+$ and $\mu_1^-$ change sign in $[0,1]$ (see Figure 6 (A) and (B)) therefore the inequality is not satisfied by all 2-increasing functions. On the other hand, $\mu_2^+$ and $\mu_2^-$ are nonnegative on $[0,1]$ (see Figure 6 (C) and Figure 7 (A).
\begin{figure}
     \centering
          \begin{subfigure}[b]{0.3\textwidth}
         \centering
         \includegraphics[width=\textwidth]{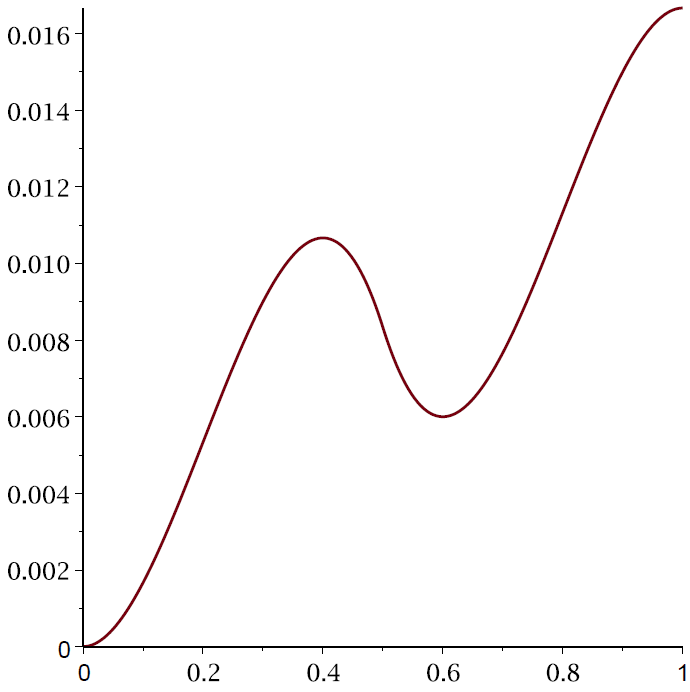}
         \caption{$\mu_2^-$}
              \end{subfigure}
     \hfill
          \begin{subfigure}[b]{0.3\textwidth}
         \centering
         \includegraphics[width=\textwidth]{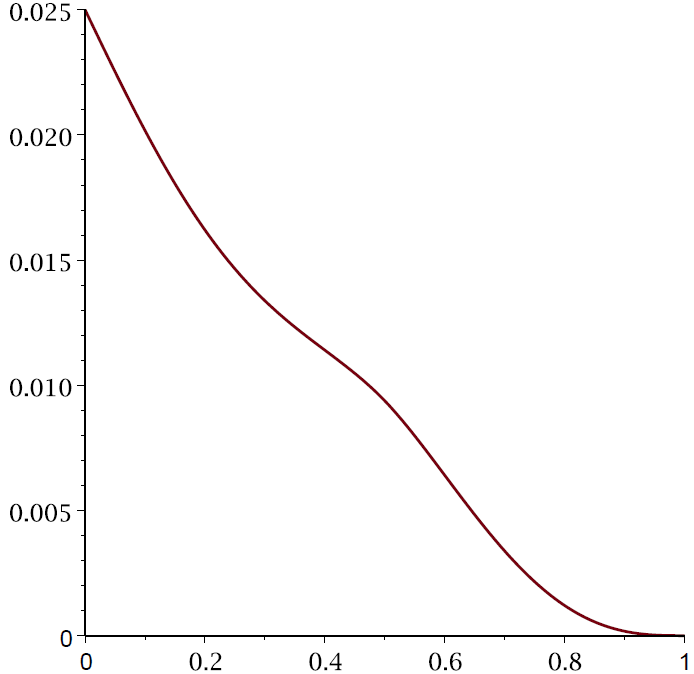}
         \caption{$\mu_3^+$}
              \end{subfigure}
     \hfill
     \begin{subfigure}[b]{0.3\textwidth}
         \centering
         \includegraphics[width=\textwidth]{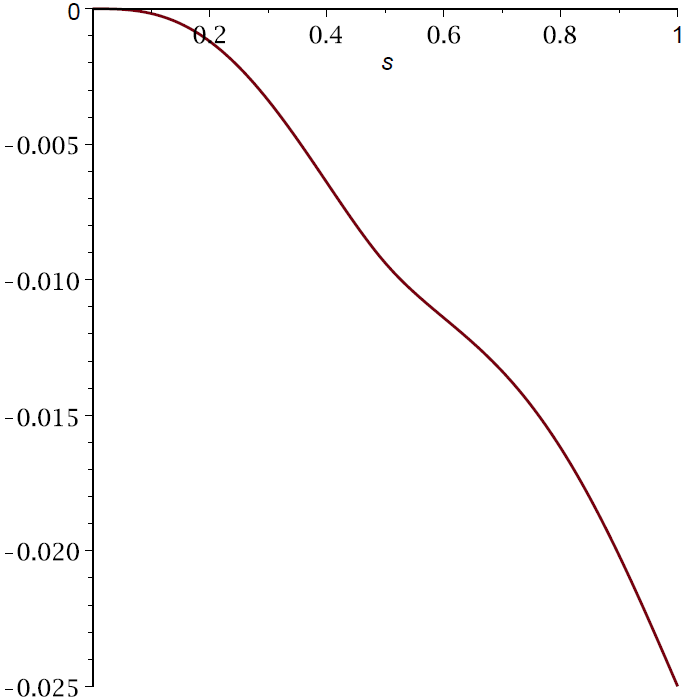}
         \caption{$\mu_3^-$}
              \end{subfigure}
        \caption{The graphs of the functions $\mu_2^-,\mu_3^+$ and $\mu_3^-$ connected with inequality \eqref{15}.}
        \end{figure}

Therefore, applying Theorem \ref{sufth} with $n=3$ and $\lambda\in\{0,1\}$, we obtain that \eqref{15} is satisfied by all $f\in M_{2^+,3^+}(I)$ and $f\in M_{2^+,3^-}(I)$, respectively. The function $\mu_3$ vanishes at $\lambda=1/2$ and condition (b)(i) of Theorem \ref{sufth} with $n=4$ and $\lambda=1/2$ is satisfied (see Figure 7 (B) and (C)), therefore, \eqref{15} also holds for all $f\in M_{2^+,4^+}(I)$. Using the linearity of \eqref{15}, it now follows that it holds for all $f$ satisfying \eqref{16}.
\end{example}

\end{document}